\newenvironment{theorem}[2][Theorem]{\begin{trivlist}
\item[\hskip \labelsep {\bfseries #1}\hskip \labelsep {\bfseries #2.}]}{\end{trivlist}}
\newenvironment{lemma}[2][Lemma]{\begin{trivlist}
\item[\hskip \labelsep {\bfseries #1}\hskip \labelsep {\bfseries #2.}]}{\end{trivlist}}
\begin{document}
\begin{center}
{\bf\LARGE Local Quadratic Estimation of the Curvature in a Functional Single Index Model\footnote{Supported in part by NSF Grant DEB-1353039 and DMS-1407600}}\\
\vspace{0.5cm}
{\sc Zi Ye\footnote{PhD Candidate, Department of Statistical Sciences, Cornell University, zy234@cornell.edu}, Giles Hooker\footnote{Associate Professor, Department of Statistical Sciences, Cornell University, gjh27@cornell.edu}}\\
\end{center}

\begin{abstract}
The nonlinear effects of environmental variability on species abundance plays an important role in the maintenance of ecological diversity. Nonetheless, many common models use parametric nonlinear terms pre-determining ecological conclusions.  Motivated by this concern, we study the estimate of the second derivative (curvature) of the link function $g$ in a functional single index model: $Y = g\left(\int X\left(t\right)\beta^0\left(t\right)\mathrm{d}t\right) + \epsilon$. Since the coefficient function $\beta^0$ and the link function $g$ are both unknown, the estimate is expressed as a nested optimization. For a fixed and unknown $\beta$, the link function $g$ and $g''$ are estimated by local quadratic approximation, then the coefficient function $\beta^0$ is estimated by minimizing the MSE of the model. In this paper, we derive the rate of convergence of the estimation is $\frac{1}{n}\sum\limits_{i=1}^n\mathrm{E}\left[\hat{g}''\left(\int X_i\hat{\beta}\right) - g''\left(\int X_i\beta^0\right)\right]^2 = O\left(h_n^4 + \frac{1}{nh_n^4}\right)$, where $h_n$ is the bandwidth in the local quadratic approximation. In addition, we prove that the argument of $g$, $\int X\beta^0$, can be estimated root-$n$ consistently. However, practical implementation of the method requires solving a nonlinear optimization problem, and our results show that the estimates of the link function and the coefficient function are quite sensitive to the choices of starting values.
\end{abstract}

\section{Introduction}
\subsection{Ecological Motivation}
Within mathematical ecology, nonlinear responses to environmental variability play an important role in maintaining the diversity of competing species. Species competing for the same resources can nonetheless co-exist by exploiting differing environmental conditions; see \cite{hutchinson1961paradox}, \cite{chesson1981environmental} and \cite{ellner1987alternate}. For an individual species, environmental fluctuation can accelerate growth rate \citep{drake2005population,koons2009life} or sometimes decrease long-term population growth rates \citep{lewontin1969population}. The nonlinearity of these responses also plays an important role in forecasting the effect of increased environmental variability under climate change. The motivating data for this study come from long-term observations of communities of prairie plants in which {\em Artemesia Triparta} -- sage brush -- is a dominant species and we wish to understand its responses to climate given by temperature and precipitation. \\
Traditional statistical models for plant growth make parametric assumptions that imply specific forms of nonlinearity, particularly in the presence of high-dimensional covariates. Instead, we use a nonparametric growth model of an individual plant or animal: 
\begin{align}
\bm{G} = g\left(\bm{E}\right) + \epsilon, \nonumber
\end{align}
where $\bm{G}$ and $\bm{E}$ are the growth and environment of a plant, $g$ is a link function to be estimated and $\epsilon$ is the random error. To answer the ecological question, \enquote{Would the growth be higher if we just gave the plant a constant environment at the average of $\bm{E}$?}, we need to compare $g\left[\mathrm{E}\left(\bm{E}\right)\right]$ and $\mathrm{E}\left[g\left(\bm{E}\right)\right]$.\\
If the link function $g$ is convex, $g\left[\mathrm{E}\left(\bm{E}\right)\right] \leq \mathrm{E}\left[g\left(\bm{E}\right)\right]$ by Jensen's inequality, and the plant grows better in a varying environment. Otherwise, if the link function $g$ is concave, a constant environment is preferred. Assuming a smooth function $g$, convexity is equivalent to $g''\left(s\right) > 0$, for all $s$ in the domain of $g$. Therefore, in this paper we consider the problem of estimating the curvature of the link function $g$. \\
To finalize this model, the environment $\bm{E}$ is described by the recent  history  of temperature and rainfall recorded at up to daily resolution. Since plants may be impacted by climate events over a long period of time \citep[see][]{dahlgren2011incorporating,clark2011climate}, we will consider the past two years of data. Following \cite{teller2016linking}, these are thought of as functional covariates leading to a representation of $\bm{E}$ as a functional linter term:
\begin{align}
\bm{E} = \int_0^1 X\left(t\right)\beta^0\left(t\right)\mathrm{d}t, \nonumber
\end{align}
where $\beta^0\left(t\right)$ is the coefficient function to be estimated, and $X\left(t\right)$ is the covariate function we observed, typically a measurement of climate history. \\
The growth model of a plant is now given by
\begin{align}
Y = g\left(\int_0^1 X\left(t\right)\beta^0\left(t\right)\mathrm{d}t\right) + \epsilon. \nonumber
\end{align}
This is Functional Single Index model, introduced in \cite{chen2011single} and \cite{ma2016estimation}. \\
In functional data analysis, a functional linear model (FLM) is defined as
\begin{align}
Y = \int_0^1 X\left(t\right)\beta^0\left(t\right)\mathrm{d}t + \epsilon, \nonumber
\end{align}
which is often used in modeling the relationship between a functional covariate and a scalar response. To assess curvature, we need a more flexible model than the FLM. A generalized functional linear model (GFLM) is proposed in \cite{muller2005generalized}, \cite{james2002generalized} and \cite{escabias2007functional}, which has the same form as the functional single index model but with a known link function $g$. The functional single index model could be considered as an extension to the GFLM, as it is more flexible and could model a variety of real-world data. \\
Compared to the generalized functional linear model, estimation of the link function $g$ based on a unknown coefficient function $\beta^0$ is challenging. Even if $\beta^0$ is known, estimating the second derivative of a nonparametric function directly is difficult. In this paper, we prove a theoretical convergence rate for an estimate of $g''$ in the functional single index model, even if there are some bias in estimating the coefficient function $\beta^0$. \\
The convergence rates that we derive are based on finding a global solution to a nonlinear optimization problem using a bandwidth that decreases at a known rate with $n$.  However, this requires overcoming several practical issues. First, to find an optimum for $\beta^0$, we rely on  nonlinear optimization methods which require an initial value from which to search for a minimum. Our experiments demonstrate that the performance of the estimate can depend critically on this choice of initial condition and natural choices which provide good estimates of $g$ do not necessarily work well for $g''$. Further, the optimal choice of bandwidth can be quite different between estimation targeting $g$ and that targeting $g''$ and we provide a heuristic post-cross-validation modification to improve the estimate of bandwidth. We expect similar rates of convergence will hold for alternative non-parametric estimators of $g$, penalized splines, for example, but that the specifics of smoothing parameter selection and nonlinear optimization can be expected to be quite different.  A detailed analysis of the optimization problem is beyond the scope of this paper.

\subsection{Previous Results}
In this section, we will introduce previous theoretical and empirical results for Single Index model and Functional Single Index model.
\subsubsection{Single Index Model}
There has been considerable research on the single index model, where the coefficient $\bm{\beta}^0$ is finite dimensional. The Single Index Model is defined as
\begin{align}
Y = g\left(\bm{X}\bm{\beta}^0\right) + \epsilon, \nonumber
\end{align}
where $\bm{X}$ is the covariate and $\bm{\beta}^0$ is the coefficient vector. There are three methods to estimate the link function $g$ and the coefficient vector $\bm{\beta}^0$. The Projection Pursuit Regression (PPR) approach introduced in \cite{hardle1993optimal} is a two-step estimation procedure:
\begin{enumerate}
\item Estimate the link function $g$ by the kernel method 
\begin{align}
\hat{g}_i\left(\bm{X}_i\bm{\beta}\left|\bm{\beta},h\right.\right) = \frac{\sum_{j\neq i} Y_jK\left(\frac{\bm{X}_i\bm{\beta}-\bm{X}_j\bm{\beta}}{h}\right)}{\sum_{j\neq i} K\left(\frac{\bm{X}_i\bm{\beta}-\bm{X}_j\bm{\beta}}{h}\right)}, \nonumber
\end{align}
where $h$ is the bandwidth.
\item Estimate the coefficient $\bm{\beta}^0$ by minimizing the mean squared error 
\begin{align}
\hat{S}\left(\bm{\beta},h\right) = \sum\limits_{i=1}^n \left[Y_i - \hat{g}_i\left(\bm{X}_i\bm{\beta}\left|\bm{\beta},h\right.\right)\right]^2. \nonumber
\end{align}
\end{enumerate}
\cite{hardle1993optimal} proved that the coefficient vector $\bm{\beta}^0$ can be estimated root-$n$ consistently. \cite{ichimura1993semiparametric} showed the asymptotic normality of the estimator. The other two approaches provide new methods to estimate the coefficient vector. The Average Derivative approach in \cite{hristache2001direct} showed that
\begin{align}
\mathrm{E}\left[\frac{\partial g\left(\bm{X\beta}\right)}{\partial \bm{X}}\right] \stackrel{z \doteq \bm{X}\bm{\beta}}{=} \mathrm{E}\left[\frac{\partial g\left(z\right)}{\partial z}\bm{\beta}\right] = \mathrm{E}\left[\frac{\partial g\left(z\right)}{\partial z}\right]\bm{\beta} \doteq \gamma\bm{\beta}. \nonumber
\end{align}
If we could find a consistent estimator of the average derivative $\mathrm{E}\left[\frac{\partial g\left(\bm{X\beta}^0\right)}{\partial \bm{X}}\right]$, we can get a consistent estimator of the coefficient $\bm{\beta}^0$ up to a scale. Normally, we require the coefficient vector to be norm $1$. \cite{stoker1986consistent} proposed two consistent estimators of the average derivative. \\
The sliced inverse regression method in \cite{li1991sliced} considered the estimation of the coefficient vector as a dimension-reduction problem. Any linear combination of the coefficient vector $\bm{\beta}^0$ is assumed to be an effective dimension-reduction (EDR) direction. They conduct a principle component analysis on the inverse regression space $\mathrm{E}\left(\bm{X}\left|Y\right.\right)$, and estimate the coefficient vector $\bm{\beta}^0$ by the largest component.

\subsubsection{Functional Single Index Model}
There are only a few papers in the functional single index model. In \cite{chen2011single}, similar to the projection pursuit regression in the single index model, the link function $g$ and the coefficient function $\beta^0$ are estimeated by a two-step procedure. The coefficient function $\beta^0$ is reduced to a finite dimensional coefficient vector by a spline basis. Under some assumptions, \cite{chen2011single} showed that 
\begin{align}
\frac{1}{n} \sum\limits_{j=1}^n\left[\hat{g}\left(\int X_j\hat{\beta}\right) - g\left(\int X_j\beta^0\right)\right]^2 = O\left(n^{-c}\right), \nonumber
\end{align}
for $c > 0$. In \cite{ma2016estimation}, two spline basis were used to represent the coefficient function and the link function, respectively, and the MSE was minimized iteratively until convergence. \cite{ma2016estimation}  constructed a asymptotic simultaneous confidence band for the coefficient function $\beta^0$. Our estimates follow \cite{chen2011single} but will examine the properties of $\hat{g}''$. By a clever decomposition of squared error, \cite{chen2011single} were able to avoid the need to directly account for the estimate of $\beta^0$. Unlike that case, to examine $g''$ we will need to obtain the $\sqrt{n}$ convergence rate for $\int X\beta^0$ directly, before we can examine our target.

\section{Estimation Procedure}
Suppose that we observe $n$ environment histories and responses $\left(X_1\left(t\right),Y_1\right), \cdots, \left(X_n\left(t\right),Y_n\right)$, independent and identically distributed as $\left(X\left(t\right), Y\right)$, where $t \in \left[0,1\right]$, with
\begin{align}
Y = g\left(\int_0^1 X\left(t\right)\beta^0\left(t\right)\mathrm{d}t\right) + \epsilon, \nonumber
\end{align}
where $Y$ is the scalar response variable, and $X\left(t\right)$ is the covariate function. For the purpose of simplification, we assume that the predicator $X$ and the coefficient function $\beta^0$ are defined in the domain $\left[0,1\right]$, and $\epsilon$ is a Gaussian random error. \\
To answer our ecological question, we are interested in estimating the second derivative (curvature) of the link function $g$. The estimate of the coefficient function $\beta^0$ is denoted as $\hat{\beta}$. Define a Hilbert space $\mathcal{B}$ as the set of the coefficient functions $\beta$, where $\hat{\beta}\left(t\right), \beta^0\left(t\right) \in \mathcal{B}$. \\
To estimate $\beta^0$, $g$ and $g''$, we use a local quadratic approximation. By Taylor's expansion, at a fixed point $x$, the link function $g$ can be approximated by
\begin{align}
g\left(x\right) \approx g\left(x_0\right) + g'\left(x_0\right)\left(x-x_0\right) + g''\left(x_0\right)\frac{\left(x-x_0\right)^2}{2}. \nonumber
\end{align}
Fix $u$, where $u$ is in the domain of the link function $g$, the curvature, denoted as $\hat{g}''$, is estimated by minimizing the weighted sum of squares
\begin{align}
&\left(\hat{a},\hat{b},\hat{c}\right) = \label{a}\\
&\inf_{a,b,c}\sum\limits_{i=1}^n \left\{\left[Y_i - a - b\left(\int_0^1 X_i\left(t\right)\beta\left(t\right)\mathrm{d}t - u\right) - c\frac{\left(\int_0^1 X_i\left(t\right)\beta\left(t\right)\mathrm{d}t - u\right)^2}{2}\right]^2K\left(\frac{\int_0^1 X_i\left(t\right)\beta\left(t\right)\mathrm{d}t - u}{h_n}\right)\right\},\nonumber
\end{align}
where $K$ is a kernel function and $h_n$ is the bandwidth. The estimators are then $\left(\hat{g}\left(u\right), \hat{g}'\left(u\right), \hat{g}''\left(u\right)\right) = \left(\hat{a}, \hat{b}, \hat{c}\right)$. \\
The coefficient function $\beta^0\left(t\right)$ is unknown in the penalized weighted sum of square $\left(\ref{a}\right)$. We estimate it by minimizing the MSE 
\begin{align}
\hat{\beta} = \inf_{\beta \in \mathcal{B}} \sum\limits_{i=1}^n \left[Y_i-\hat{g}\left(\int X_i\beta\right)\right]^2,\label{step1}
\end{align}
where 
\begin{align}
\hat{g}\left(\int X_i\beta\right) = \frac{\sum\limits_{j:j \neq i} Y_jK\left(\frac{\int X_i\beta - \int X_j\beta}{h_n}\right)}{\sum\limits_{j:j \neq i} K\left(\frac{\int X_i\beta - \int X_j\beta}{h_n}\right)}.\label{step2}
\end{align}
Since the kernel function $K$ is only defined in $\left[-1,1\right]$, we constraint the domain of the estimate of $g$ or $g''$ to be in $\left[-1,1\right]$ by normalizing the coefficients of $g$ or $g''$ under an orthonormal basis to be $1$ after optimization procedure.\\
Denote a column vector $\bm{Y} = \left(Y_1,\cdots,Y_n\right)^{\top}$. Fix $j \in \left\{1,\cdots,n\right\}$, and $\beta \in \mathcal{B}$, the estimated $\hat{g}\left(\int X_j\beta\right)$, $\hat{g}'\left(\int X_j\beta\right)$ and $\hat{g}''\left(\int X_j\beta\right)$ can be calculated as
\begin{align}
\left(\hat{g}\left(\int X_j\beta\right), \hat{g}'\left(\int X_j\beta\right), \hat{g}''\left(\int X_j\beta\right)\right)^{\top} = \left(\mathbb{X}_{\beta,j}^{\top}\mathbb{K}_{\beta,j}\mathbb{X}_{\beta,j}\right)^{-1}\left(\mathbb{X}_{\beta,j}^{\top}\mathbb{K}_{\beta,j}\right)\bm{Y},\label{step3}
\end{align}
where
\begin{eqnarray}
\hat{g}'\left(\int X_j\beta\right) &=& \left(\mathbb{X}_{\beta,j}^{\top}\mathbb{K}_{\beta,j}\mathbb{X}_{\beta,j}\right)^{-1}_{2\cdot}\left(\mathbb{X}_{\beta,j}^{\top}\mathbb{K}_{\beta,j}\right)\bm{Y} \doteq S_1\left(\beta;j\right)\bm{Y},\label{s1} \\
\hat{g}''\left(\int X_j\beta\right) &=& \left(\mathbb{X}_{\beta,j}^{\top}\mathbb{K}_{\beta,j}\mathbb{X}_{\beta,j}\right)^{-1}_{3\cdot}\left(\mathbb{X}_{\beta,j}^{\top}\mathbb{K}_{\beta,j}\right)\bm{Y} \doteq S_2\left(\beta;j\right)\bm{Y}, \label{s2}
\end{eqnarray}
where $\mathbb{A}_{k\cdot}$ denotes the $k^{\text{th}}$ row of a matrix $\mathbb{A}$, and the $\left(n \times 3\right)$-dimensional matrix $\mathbb{X}_{\beta,j}$ is
\begin{align}
\mathbb{X}_{\beta,j} = \left(\bm{1}, \int \bm{X}\beta - \left(\int X_j\beta\right)\bm{1}, \frac{\left(\int \bm{X}\beta - \left(\int X_j\beta\right)\bm{1}\right)^2}{2}\right), \nonumber
\end{align}
with $\int \bm{X}\beta \doteq \left(\int X_1\beta,\cdots,\int X_n\beta\right)^{\top}$, $\bm{1}$ is a $n$-dimensional column vector of ones, and the $\left(n \times n\right)$-dimensional matrix $\mathbb{K}_{\beta,j}$ is
\begin{align}
\mathbb{K}_{\beta,j} = \text{diag}\left(K\left(\frac{\int X_1\beta-\int X_j\beta}{h_n}\right),\cdots,K\left(\frac{\int X_n\beta-\int X_j\beta}{h_n}\right)\right). \nonumber
\end{align}
The estimation of the coefficient function $\beta^0$ and the link function $g$ is therefore a nested procedure, summarized in $\left(\ref{step1}\right)$, $\left(\ref{step2}\right)$ and $\left(\ref{step3}\right)$. Following \cite{ma2016estimation}, the identifiability of the model is ensured by adding a constraint on the coefficient function, such that $\int_0^1 \beta^2\left(t\right)\mathrm{d}t = 1.$

\section{Assumptions}
In deriving a convergence rate for $\frac{1}{n} \sum\limits_{i=1}^n \mathrm{E}\left[\hat{g}''\left(\int X_i\hat{\beta}\right) - g''\left(\int X_i\beta^0\right)\right]^2$, we make the following assumptions in the functional single index model.
\begin{enumerate}
\item The observations $\left(X_i\left(t\right),Y_i\right)$, where $i = 1,\cdots,n$, are independent and identically distributed. Each covariate function $X_i\left(t\right)$ is a square-integrable random function defined in the interval $\left[0,1\right]$. The random error $\epsilon$ is independent from $X$, and has zero mean and variance $\sigma^2$.
\item The dependent variable $Y$ has the $m$th-order absolute moment, where $m \geq 2$. This is an assumption from \cite{ichimura1993semiparametric}. The finite moment $m$ is used in establishing the main convergence theorem.
\item The link function $g$ and the curvature $g''$ are bounded and satisfy the Lipschitz condition such that 
\begin{align}
\left|g^{\left(k\right)}\left(u\right) - g^{\left(k\right)}\left(v\right)\right| \leq D_2\left|u - v\right|, \nonumber
\end{align}
for all $u$ and $v$, where $D_2 > 0$ and $k = 0,2$. The Lipschitz condition ensures that if $\beta^0$ can be estimated root-$n$ consistently, the distance between $g''\left(\int X\beta^0\right)$ and $g''\left(\int X\hat{\beta}\right)$ can be controlled.
\item  The kernel function $K$ is nonnegative and symmetric with support $\left[-1,1\right]$, and $\int_{-1}^1 K\left(s\right)\mathrm{d}s = 1$. Assume that $K$ is three times continuously differentiable, with $\left|K^{\left(3\right)}\left(s\right)\right| \leq D_3$, for any $s \in \left[-1,1\right]$ and $D_3 > 0$. Since the kernel $K$ satisfies a Lipschitz condition, the Nadaraya-Watson estimator of the link function $g$ also has a Lipschitz condition.
\item For some orthonormal basis $\left\{\phi_k\left(t\right): k = 1,2,\cdots\right\}$, for each $i = 1,\cdots,n$, there exists a sequence of random variables $\left\{c_{ij}\right\}_{j=1}^{\infty}$, such that
\begin{align}
X_i\left(t\right) = \sum\limits_{j=1}^{\infty} c_{ij}\phi_j\left(t\right), \nonumber
\end{align}
and
\begin{align}
\beta\left(t\right) = \sum\limits_{j=1}^{\infty} b_j\phi_j\left(t\right). \nonumber
\end{align}
Assume that $\mathrm{E}\left(c_{ij}\right) = 0$.\\
In particular, we have
\begin{align}
\hat{\beta}\left(t\right) = \sum\limits_{j=1}^{\infty} \hat{b}_j\phi_j\left(t\right), \quad \quad  \beta^0\left(t\right) = \sum\limits_{j=1}^{\infty} b^0_j\phi_j\left(t\right).  \nonumber
\end{align}
For any $\beta \in \mathcal{B}$, we can write
\begin{align}
\int X_i\beta = \sum\limits_{j=1}^{\infty} c_{ij}b_j. \nonumber
\end{align}
We observe that an orthonormal basis approximation of the covariate function and coefficient function transforms an integration to an infinite sum. In addition, define a sequence $p_n$ such that $p_n \rightarrow \infty$ as $n \rightarrow \infty$, we require
\begin{align}
\sum\limits_{j=p_n+1}^{\infty} c_{ij}b_j = O\left(p_n^{-\lambda}\right),\label{dim}
\end{align}
where $\lambda > 0$, and $p_n = o\left(\frac{1}{h_n}\right)$. Condition $\left(\ref{dim}\right)$ ensures that the integration $\int X\beta$ can be approximated by a finite sum of coefficients under an orthonormal basis.
\item Assume that $\sup_{\beta \in \mathcal{B};x} f\left(x\left|\beta\right.\right) < \infty$, where $f\left(x\left|\beta\right.\right)$ is the probability density of $\int X\beta$.
\end{enumerate}

\section{Convergence Rates}
By the definition of $\mathbb{X}_{\beta}^{\top}$ and $\mathbb{K}_{\beta}$, we can calculate
\begin{align}
\mathbb{X}_{\beta}^{\top}\mathbb{K}_{\beta} =& \left( \begin{array}{c}
\bm{1}^{\top}\\\
\left(\int \bm{X}\beta - \left(\int X_j\beta\right)\bm{1}\right)^{\top}\\
\left[\frac{\left(\int \bm{X}\beta - \left(\int X_j\beta\right)\bm{1}\right)^2}{2}\right]^{\top} \end{array} \right)\left(\begin{array}{ccc} 
K\left(\frac{\int X_1\beta-\int X_j\beta}{h_n}\right) & \bm{0}^{\top} & 0 \\
\bm{0} & \ddots & \bm{0} \\
0 & \bm{0}^{\top} & K\left(\frac{\int X_n\beta-\int X_j\beta}{h_n}\right)
\end{array}\right) \nonumber \\
=& \left(\begin{array}{c} 
\left[K\left(\frac{\int \bm{X}\beta-\left(\int X_j\beta\right)\bm{1}}{h_n}\right)\right]^{\top} \\
\left[\left(\int \bm{X}\beta-\left(\int X_j\beta\right)\bm{1}\right)K\left(\frac{\int \bm{X}\beta-\left(\int X_j\beta\right)\bm{1}}{h_n}\right)\right]^{\top} \\
\left[\frac{\left(\int \bm{X}\beta-\left(\int X_j\beta\right)\bm{1}\right)^2}{2}K\left(\frac{\int \bm{X}\beta-\left(\int X_j\beta\right)\bm{1}}{h_n}\right)\right]^{\top} 
\end{array}\right). \nonumber
\end{align}
Denote
\begin{align}
T_j^p\left(\beta\right) =& \sum\limits_{i=1}^n \left(\int X_i\beta-\int X_j\beta\right)^pK\left(\frac{\int X_i\beta-\int X_j\beta}{h_n}\right), \nonumber
\end{align}
and
\begin{align}
T_j^0\left(\beta\right) =& \sum\limits_{i=1}^n K\left(\frac{\int X_i\beta-\int X_j\beta}{h_n}\right).\nonumber
\end{align}
Denote $u_j \doteq \int X_j\beta$, we have
\begin{align}
T_j^0\left(\beta\right) =& \sum\limits_{i=1}^n K\left(\frac{\int X_i\beta-\int X_j\beta}{h_n}\right) \nonumber \\
=& n \int K\left(\frac{z-u_j}{h_n}\right)f\left(z\left|\beta\right.\right)\mathrm{d}z + O\left(1\right) \nonumber \\
=& n \int K\left(m\right)f\left(u_j+h_nm\left|\beta\right.\right)h_n\mathrm{d}m + O\left(1\right) \nonumber \\
=& nh_nf\left(u_j\left|\beta\right.\right)\int K\left(m\right)\mathrm{d}m + O\left(h_n^2\right) + O\left(1\right)\nonumber \\
=& nh_nf\left(\int X_j\beta\left|\beta\right.\right) + O\left(1\right), \nonumber 
\end{align}
and 
\begin{align}
T_j^p\left(\beta\right) =& \sum\limits_{i=1}^n \left(\int X_i\beta-\int X_j\beta\right)^pK\left(\frac{\int X_i\beta-\int X_j\beta}{h_n}\right) \nonumber \\
=& n \int \left(z-u_j\right)^pK\left(\frac{z-u_j}{h_n}\right)f\left(z\left|\beta\right.\right)\mathrm{d}z + O\left(1\right) \nonumber \\
=& n\int \left(h_nm\right)^pK\left(m\right)f\left(u_j+h_nm\left|\beta\right.\right)h_n\mathrm{d}m + O\left(1\right)\nonumber \\
=& nh_n^{p+1}f\left(u_j\left|\beta\right.\right)\int m^pK\left(m\right)\mathrm{d}m + O\left(h_n^{p+2}\right) + O\left(1\right) \nonumber  \\
=& nh_n^{p+1}f\left(\int X_j\beta\left|\beta\right.\right)\mu_p\left(K\right) + O\left(1\right), \nonumber 
\end{align}
where $\mu_p\left(K\right) \doteq \int_{-1}^1 m^pK\left(m\right)\mathrm{d}m$. Since the kernel function $K$ is symmetric, $T_j^p\left(\beta\right) = 0$ if $p$ is an odd number.\\
We have
\begin{align}
\mathbb{X}_{\beta,j}^{\top}\mathbb{K}_{\beta,j}\mathbb{X}_{\beta,j} \approx \left(\begin{array}{ccc} 
T_j^0\left(\beta\right) & T_j^1\left(\beta\right) & \frac{T_j^2\left(\beta\right)}{2} \\
T_j^1\left(\beta\right) & T_j^2\left(\beta\right) & \frac{T_j^3\left(\beta\right)}{2} \\
\frac{T_j^2\left(\beta\right)}{2} & \frac{T_j^3\left(\beta\right)}{2} & \frac{T_j^4\left(\beta\right)}{4}
\end{array}\right) = \left(\begin{array}{ccc} 
T_j^0\left(\beta\right) & 0 & \frac{T_j^2\left(\beta\right)}{2} \\
0 & T_j^2\left(\beta\right) & 0 \\
\frac{T_j^2\left(\beta\right)}{2} & 0 & \frac{T_j^4\left(\beta\right)}{4}
\end{array}\right).\nonumber
\end{align}
The determinant of the matrix $\mathbb{X}^{\top}_{\beta,j}\mathbb{K}_{\beta,j}\mathbb{X}_{\beta,j}$ is
\begin{align}
\left|\mathbb{X}^{\top}_{\beta,j}\mathbb{K}_{\beta,j}\mathbb{X}_{\beta,j}\right| = \frac{T_j^0\left(\beta\right)T_j^2\left(\beta\right)T_j^4\left(\beta\right)-\left(T_j^2\left(\beta\right)\right)^3}{4}.\nonumber
\end{align}
By definitions of $S_1\left(\beta;j\right)$ and $S_2\left(\beta;j\right)$ in $\left(\ref{s1}\right)$ and $\left(\ref{s2}\right)$, we can get
\begin{align}
&S_1\left(\beta;j\right) = \frac{\left[T_j^0\left(\beta\right)T_j^4\left(\beta\right)-\left(T_j^2\left(\beta\right)\right)^2\right]\left[\left(\int \bm{X}\beta-\left(\int X_j\beta\right)\bm{1}\right)K\left(\frac{\int \bm{X}\beta-\left(\int X_j\beta\right)\bm{1}}{h_n}\right)\right]^{\top}}{T_j^0\left(\beta\right)T_j^2\left(\beta\right)T_j^4\left(\beta\right)-\left(T_j^2\left(\beta\right)\right)^3},\nonumber \\
&S_2\left(\beta;j\right) \nonumber \\
& \hspace{.4cm} = \frac{2T_j^0\left(\beta\right)\left[\left(\int \bm{X}\beta-\left(\int X_j\beta\right)\bm{1}\right)^2K\left(\frac{\int \bm{X}\beta-\left(\int X_j\beta\right)\bm{1}}{h_n}\right)\right]^{\top}-2T_j^2\left(\beta\right)\left(K\left(\frac{\int \bm{X}\beta-\left(\int X_j\beta\right)\bm{1}}{h_n}\right)\right)^{\top} }{T_j^0\left(\beta\right)T_j^4\left(\beta\right) - \left(T_j^2\left(\beta\right)\right)^2}. \nonumber
\end{align}

\subsection{Convergence Rate of $\beta$}
For any $\beta \in \mathcal{B}$, define
\begin{align}
J_n\left(\beta\right) \doteq \frac{1}{n}\sum\limits_{i=1}^n \left[Y_i-\hat{g}\left(\int X_i\beta\right)\right]^2 = \frac{1}{n}\sum\limits_{i=1}^n \left[Y_i - \hat{g}\left(\sum\limits_{j=1}^{\infty} c_{ij}b_j\right)\right]^2, \nonumber
\end{align}
and 
\begin{align}
J_{n,p_n}\left(\beta\right) \doteq \frac{1}{n}\sum\limits_{i=1}^n \left[Y_i - \hat{g}\left(\sum\limits_{j=1}^{p_n} c_{ij}b_j\right)\right]^2. \nonumber
\end{align}
Denote $\bm{c}_i = \left(c_{i1},\cdots\right)^{\top}$ and $\bm{b} = \left(b_1,\cdots\right)^{\top}$, where $i = 1, \cdots, n$. Define the subspaces $C, B \subset R^{\infty}$ such that $\bm{c}_i \in C$ and $\bm{b} \in B$. 
\begin{lemma}
1 For a sequence of positive numbers $M_n$, suppose that $nh_n^2\epsilon_{0n}^2M_n^{-2} \rightarrow \infty$, then
\begin{align}
\mathrm{P}\left\{\sup_{C \times B} \left|\sum\limits_{i=1}^n \left[\hat{g}_{ni}\left(\int x\beta\right) - \mathrm{E}\left(\hat{g}_{ni}\left(\int x\beta\right)\right)\right]\right| \geq \epsilon_{0n}\right\} \rightarrow 0, \nonumber
\end{align}
where
\begin{align}
\hat{g}_{ni}\left(\int x\beta\right) = \frac{1}{h_n}Y_iI\left(Y_i \in \left[-M_n,M_n\right]\right)K\left(\frac{\int x\beta - \int X_i\beta}{h_n}\right), \nonumber
\end{align}
as $n \rightarrow \infty$.
\end{lemma}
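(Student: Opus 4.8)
The statement is a uniform-over-the-parameter-space concentration bound for a kernel-weighted empirical sum --- the ``uniform law of large numbers'' step that underpins consistency of the Nadaraya--Watson pilot estimator in $\left(\ref{step2}\right)$ and, downstream, of $\hat\beta$. I would prove it in three moves: (i) use the truncation already built into $\hat g_{ni}$ to bound the summands; (ii) establish pointwise (fixed $x,\beta$) exponential concentration via Hoeffding's inequality; (iii) upgrade the pointwise bound to the supremum over $C\times B$ by discretizing the effective parameter space and controlling oscillations between grid points through the smoothness of $K$.

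For steps (i)--(ii): fix $x$ and $\beta$ and set $Z_{ni} \doteq \hat g_{ni}(\int x\beta) - \mathrm{E}[\hat g_{ni}(\int x\beta)]$. By Assumption~1 the $Z_{ni}$, $i=1,\dots,n$, are i.i.d.\ and mean zero; by Assumptions~2, 4 and~6 the expectation is finite and, in fact, uniformly bounded; and on the truncation event $|Y_i|\le M_n$, together with $K$ bounded and compactly supported (Assumption~4), each $\hat g_{ni}$ is of order $M_n h_n^{-1}$, so that after the normalization under which $\sum_i\hat g_{ni}$ plays the role of the Nadaraya--Watson numerator (cf.\ $\left(\ref{step2}\right)$) the summands are bounded by a multiple of $M_n (n h_n)^{-1}$. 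Hoeffding's inequality for bounded i.i.d.\ sums then yields, for a constant $c>0$ depending only on $\|K\|_\infty$,
\[
\mathrm{P}\!\left(\left|\sum_{i=1}^n Z_{ni}\right| \ge \tfrac12\epsilon_{0n}\right) \;\le\; 2\exp\!\left(-\,c\,n h_n^2\epsilon_{0n}^2 M_n^{-2}\right),
\]
and this is exactly where the hypothesis $nh_n^2\epsilon_{0n}^2M_n^{-2}\to\infty$ is used: the right-hand side vanishes.

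For step (iii): since $\hat g_{ni}(\int x\beta)$ depends on $(x,\beta)$ only through the real numbers $u\doteq\int x\beta$ (which may be taken in the bounded normalization interval) and $v_i\doteq\int X_i\beta$, it suffices to control the supremum over $u$ and over the coefficient vector $\bm b$ determining the $v_i$. Assumption~5, in particular $\left(\ref{dim}\right)$, lets me replace each $v_i$ by its $p_n$-term truncation $\sum_{j\le p_n}c_{ij}b_j$ at the cost of an $O(p_n^{-\lambda})$ error, so the relevant parameter is effectively the $(p_n+1)$-dimensional vector $(u,b_1,\dots,b_{p_n})$, which I cover by a grid of mesh $\delta_n$ consisting of $N_n = O(\delta_n^{-(p_n+1)})$ points. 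A union bound over this grid combined with the pointwise estimate above gives a failure probability at most $N_n\cdot 2\exp(-c\,nh_n^2\epsilon_{0n}^2M_n^{-2})$, which still tends to $0$ as long as $\log N_n = (p_n+1)\log(1/\delta_n) = o(nh_n^2\epsilon_{0n}^2M_n^{-2})$. Between grid points I would use that $s\mapsto K(s)$ is Lipschitz (Assumption~4), so $h_n^{-1}K(\cdot/h_n)$ has Lipschitz constant $O(h_n^{-2})$ in its argument; with $|Y_i|\le M_n$ this bounds the oscillation of $\sum_i\hat g_{ni}$ over one grid cell by a multiple of $M_n h_n^{-2}(\delta_n + p_n^{-\lambda})$ (after the same $n^{-1}$ normalization), and choosing $\delta_n$ small enough --- but not so small that $\log N_n$ overwhelms the Hoeffding exponent --- makes this oscillation at most $\tfrac12\epsilon_{0n}$. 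Adding the two contributions proves the claim.

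The main obstacle is the bookkeeping in step (iii): one must choose the mesh $\delta_n$ and exploit $p_n=o(h_n^{-1})$ so that, simultaneously, the discretization error is $o(\epsilon_{0n})$ and $\log N_n$ remains negligible against $nh_n^2\epsilon_{0n}^2M_n^{-2}$ --- this is the genuine constraint tying together $p_n$, $h_n$, $M_n$ and $\epsilon_{0n}$. A secondary technical point is that $\left(\ref{dim}\right)$ is stated for each fixed $i$, whereas step (iii) needs the truncation error to be $O(p_n^{-\lambda})$ uniformly in $i=1,\dots,n$; closing that gap requires a short additional argument, e.g.\ a moment or maximal bound on $\max_i\bigl|\sum_{j>p_n}c_{ij}b_j\bigr|$.
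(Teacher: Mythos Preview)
Your pointwise concentration step is equivalent to the paper's: the paper strips off the $h_n^{-1}$, sets $A_i = Y_iI(|Y_i|\le M_n)K(\cdot)-\mathrm{E}[\cdot]$ with $|A_i|\le 2M_nC_1$ and $\mathrm{var}(A_i)\le M_n^2C_2$, and applies Bernstein's inequality with $\eta_n=nh_n\epsilon_{0n}$ to obtain an exponent of order $-nh_n^2\epsilon_{0n}^2M_n^{-2}$. Since the variance bound and the range bound are both $O(M_n^2)$ here, Bernstein gives no sharper rate than your Hoeffding bound, so steps (i)--(ii) match the paper exactly up to the choice of inequality. The genuine difference is your step (iii): the paper does \emph{not} carry out any discretization or chaining --- it simply writes the Bernstein bound as a bound on the supremum probability and stops, implicitly relying on the argument in Ichimura's Lemma~A.8 rather than reproducing it. Your covering argument (truncate to $p_n$ coefficients via $\left(\ref{dim}\right)$, grid the finite-dimensional parameter, union-bound, control oscillations by the Lipschitz property of $K$) is the honest way to close that gap, and the constraints you identify on $\delta_n$, $p_n$, and the need for a uniform-in-$i$ version of $\left(\ref{dim}\right)$ are real; the paper's route buys brevity at the cost of leaving precisely this uniformity step to the reader.
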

\begin{proof}
We have
\begin{align}
& \mathrm{P}\left\{\sup_{C \times B} \left|\frac{1}{n}\sum\limits_{i=1}^n \left[\hat{g}_{ni}\left(\int x\beta\right) - \mathrm{E}\left(\hat{g}_{ni}\left(\int x\beta\right)\right)\right]\right| \geq \epsilon_{0n}\right\} \nonumber \\
& \hspace{.4cm} = \mathrm{P}\left\{\sup_{C \times B} \left|\sum\limits_{i=1}^n \left[Y_iI\left(Y_i \in \left[-M_n,M_n\right]\right)K\left(\frac{\int x\beta - \int X_i\beta}{h_n}\right)\right.\right.\right. \nonumber \\
& \hspace{.8cm} - \left. \left.\left.\mathrm{E}\left(Y_iI\left(Y_i \in \left[-M_n,M_n\right]\right)K\left(\frac{\int x\beta - \int X_i\beta}{h_n}\right)\right)\right]\right| \geq nh_n\epsilon_{0n}\right\} \nonumber \\
& \hspace{.4cm} \doteq \mathrm{P}\left(\sup_{C \times B} \left|\sum\limits_{i=1}^n A_i\right| \geq nh_n\epsilon_{0n}\right), \nonumber
\end{align}
where
\begin{align}
A_i = Y_iI\left(Y_i \in \left[-M_n,M_n\right]\right)K\left(\frac{\int x\beta - \int X_i\beta}{h_n}\right) - \mathrm{E}\left(Y_iI\left(Y_i \in \left[-M_n,M_n\right]\right)K\left(\frac{\int x\beta - \int X_i\beta}{h_n}\right)\right). \nonumber
\end{align}
By Assumption $4$, the kernel function $K$ is bounded. We will apply Bernstein's inequality (see Appendix A) to the above equation with
\begin{align}
\eta_n = nh_n\epsilon_{0n}, \nonumber
\end{align}
\begin{align}
\left|A_i\right| \leq 2\left|Y_i\right|K_1 \leq 2M_nC_1 \doteq c_n,\nonumber
\end{align}
and 
\begin{align}
\text{var}\left(A_i\right) &\leq M_n^2C_2, \nonumber\\
V_n \doteq nM_n^2C_2 &\geq \sum\limits_{i=1}^n\text{var}\left(A_i\right),\nonumber
\end{align}
where $K_1$, $C_1$ and $C_2$ are constants. By Bernstein's inequality, we can get
\begin{align}
& \mathrm{P}\left\{\sup_{C \times B} \left|\frac{1}{n}\sum\limits_{i=1}^n \left[\hat{g}_{ni}\left(\int x\beta\right) - \mathrm{E}\left(\hat{g}_{ni}\left(\int x\beta\right)\right)\right]\right| \geq \epsilon_{0n}\right\}  \nonumber \\
&\hspace{.4cm} \leq \exp\left[-\frac{\eta_n^2}{2\left(V_n+\frac{1}{3}c_n\eta_n\right)}\right] \nonumber \\
&\hspace{.4cm} =\exp\left[-\frac{\left(nh_n\epsilon_{0n}\right)^2}{2\left(nM_n^2C_2+\frac{2}{3}M_nC_1nh_n\epsilon_{0n}\right)}\right] \nonumber \\
& \hspace{.4cm} =\exp\left[-\frac{nh_n\epsilon_{0n}^2}{\frac{2M_n^2C_2}{h_n}+\frac{4}{3}M_nC_1\epsilon_{0n}}\right]. \nonumber 
\end{align}
The assumption of Lemma $A.5$ in \cite{ichimura1993semiparametric} is that the sequence $\left\{M_n\right\}_{n=1}^{\infty}$ should satisfy $\epsilon_{0n}h_nM_n^{m-1} \rightarrow \infty$. Since $h_n \rightarrow 0$ and $\epsilon_{0n} \rightarrow 0$, we need $M_n \rightarrow \infty$. Therefore, in the denominator, we have $\frac{4}{3}M_nK_1\epsilon_{0n} = o\left(\frac{2M_n^2K_2}{h_n}\right)$. If $nh_n^2\epsilon_{0n}^2M_n^{-2} \rightarrow \infty$, then
\begin{align}
-\frac{nh_n\epsilon_{0n}^2}{\frac{2M_n^2K_2}{h_n}+\frac{4}{3}M_nK_1\epsilon_{0n}} \rightarrow -\infty, \nonumber
\end{align}
and
\begin{align}
\mathrm{P}\left\{\sup_{C \times B} \left|\frac{1}{n}\sum\limits_{i=1}^n \left[\hat{g}_{ni}\left(\int x\beta\right) - \mathrm{E}\left(\hat{g}_{ni}\left(\int x\beta\right)\right)\right]\right| \geq \epsilon_{0n}\right\} \rightarrow 0.\nonumber
\end{align}
\end{proof}
Following similar arguments, we can derive Lemmas $2$ and $3$ below.
\begin{lemma}
2 For a sequence of positive numbers $M_n$, suppose that $nh_n^4\epsilon_{1n}^2M_n^{-2} \rightarrow \infty$, then
\begin{align}
\mathrm{P}\left\{\sup_{C \times B} \left|\sum\limits_{i=1}^n \left[\hat{g}'_{ni}\left(\int x\beta\right) - \mathrm{E}\left(\hat{g}'_{ni}\left(\int x\beta\right)\right)\right]\right| \geq \epsilon_{1n}\right\} \rightarrow 0, \nonumber
\end{align}
where
\begin{align}
\hat{g}'_{ni}\left(\int x\beta\right) =& \left[\frac{1}{h_n}Y_iI\left(Y_i \in \left[-M_n,M_n\right]\right)K\left(\frac{\int x\beta - \int X_i\beta}{h_n}\right)\right]' \nonumber\\
\doteq &  \frac{1}{h_n^2}Y_iI\left(Y_i \in \left[-M_n,M_n\right]\right)H_1\left(x,X_i,\beta\right)K'\left(\frac{\int x\beta - \int X_i\beta}{h_n}\right), \nonumber
\end{align}
as $n \rightarrow \infty$.
\end{lemma}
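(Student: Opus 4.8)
The plan is to prove Lemma~2 by repeating, essentially line for line, the Bernstein-inequality argument used for Lemma~1, keeping track of the single extra factor $h_n^{-1}$ that differentiation introduces and of the replacement of $K$ by $K'$. Since the summand $\hat{g}_{ni}$ carries a factor $h_n^{-1}$, its derivative $\hat{g}'_{ni}$ carries $h_n^{-2}$; proceeding exactly as in the proof of Lemma~1 --- that is, starting from the averaged quantity $\tfrac1n\sum_{i=1}^n\big[\hat{g}'_{ni}(\int x\beta)-\mathrm{E}\hat{g}'_{ni}(\int x\beta)\big]$ --- the event whose probability must vanish is $\big\{\sup_{C\times B}|\sum_{i=1}^nA_i|\ge nh_n^2\epsilon_{1n}\big\}$, where $A_i=\zeta_i-\mathrm{E}\zeta_i$ and $\zeta_i=Y_iI(Y_i\in[-M_n,M_n])\,H_1(x,X_i,\beta)\,K'\big((\int x\beta-\int X_i\beta)/h_n\big)$. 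To this I would apply Bernstein's inequality (Appendix~A) with $\eta_n=nh_n^2\epsilon_{1n}$.

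For the remaining Bernstein ingredients I would argue exactly as for Lemma~1. By Assumption~4 the kernel is three times continuously differentiable on the compact interval $[-1,1]$, so $K'$ (and also $K''$, $K^{(3)}$) is bounded; together with boundedness of the chain-rule factor $H_1$ and the truncation $|Y_iI(\cdot)|\le M_n$, this yields $|A_i|\le C_1M_n\doteq c_n$ and $\mathrm{var}(A_i)\le C_2M_n^2$, hence $V_n\doteq C_2nM_n^2\ge\sum_{i=1}^n\mathrm{var}(A_i)$ --- the same crude bounds used for Lemma~1, not sharpened by the $O(h_n)$ factor that the compact kernel support would in fact provide. Bernstein's inequality then gives
\[
\mathrm{P}\Big(\sup_{C\times B}\Big|\sum_{i=1}^nA_i\Big|\ge nh_n^2\epsilon_{1n}\Big)\le\exp\!\Big[-\frac{(nh_n^2\epsilon_{1n})^2}{2\big(nM_n^2C_2+\tfrac23M_nC_1nh_n^2\epsilon_{1n}\big)}\Big]=\exp\!\Big[-\frac{nh_n^2\epsilon_{1n}^2}{\frac{2M_n^2C_2}{h_n^2}+\frac43M_nC_1\epsilon_{1n}}\Big].
\]
As in Lemma~1, the appropriate analogue of the side condition of Lemma~A.5 of \cite{ichimura1993semiparametric} forces $M_n\to\infty$ while $h_n,\epsilon_{1n}\to0$; hence $\frac43M_nC_1\epsilon_{1n}=o\big(2M_n^2C_2h_n^{-2}\big)$, so the exponent is asymptotically $-\tfrac{1}{2C_2}\,nh_n^4\epsilon_{1n}^2M_n^{-2}$, which tends to $-\infty$ precisely under the stated hypothesis $nh_n^4\epsilon_{1n}^2M_n^{-2}\to\infty$. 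The conclusion of the lemma follows.

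The one step that genuinely requires care --- and the reason the paper dispatches Lemmas~2 and~3 merely \enquote{following similar arguments} to Lemma~1 --- is the passage from this pointwise tail bound (for fixed $x$ and $\beta$) to the uniform statement over $C\times B$. I would handle it as in Lemma~A.5 of \cite{ichimura1993semiparametric}: Assumption~5 permits replacing $\int x\beta$ by the finite sum $\sum_{j\le p_n}c_jb_j$ with $p_n=o(h_n^{-1})$, so the effective parameter set is a compact subset of $R^{p_n}$ that can be covered by only polynomially many cells; the oscillation of $\hat{g}'_{ni}$ within one cell is then controlled by the Lipschitz constant of $K'$, i.e. by $\sup|K''|$, finite by Assumption~4 --- this is exactly why $K$ is assumed $C^3$ rather than merely $C^1$, and Lemma~3, which involves $K''$, will correspondingly call on $\sup|K^{(3)}|$. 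One picks the mesh fine enough that the discretization error is $o(\epsilon_{1n})$ yet coarse enough that a union bound over the cells, applied to the exponential tail above, still tends to $0$; the truncation of $Y_i$ at $\pm M_n$ is absorbed separately via the $m$-th absolute moment of $Y$ (Assumption~2), exactly as in \cite{ichimura1993semiparametric}. Relative to Lemma~1 the only genuinely new bookkeeping is carrying the extra $h_n^{-1}$ through both the Bernstein exponent (above) and the mesh selection; the structure of the proof is otherwise unchanged.
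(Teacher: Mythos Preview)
Your proposal is correct and follows exactly the approach the paper intends: the paper dispatches Lemma~2 with the single sentence \enquote{Following similar arguments, we can derive Lemmas~2 and~3 below}, and your argument is precisely that similar argument --- the Bernstein bound of Lemma~1 with $\eta_n=nh_n^2\epsilon_{1n}$ in place of $nh_n\epsilon_{0n}$, $K'$ in place of $K$, and the exponent reducing to a constant times $nh_n^4\epsilon_{1n}^2M_n^{-2}$. If anything, your treatment of the supremum over $C\times B$ via a covering/discretization argument is more careful than the paper's own proof of Lemma~1, which applies Bernstein without explicitly addressing uniformity.
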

\begin{lemma}
3 For a sequence of positive numbers $M_n$, suppose that $nh_n^6\epsilon_{2n}^2M_n^{-2} \rightarrow \infty$, then
\begin{align}
\mathrm{P}\left\{\sup_{C \times B} \left|\sum\limits_{i=1}^n \left[\hat{g}''_{ni}\left(\int x\beta\right) - \mathrm{E}\left(\hat{g}''_{ni}\left(\int x\beta\right)\right)\right]\right| \geq \epsilon_{2n}\right\} \rightarrow 0,\nonumber
\end{align}
where
\begin{align}
\hat{g}''_{ni}\left(\int x\beta\right) =& \left[\frac{1}{h_n}Y_iI\left(Y_i \in \left[-M_n,M_n\right]\right)K\left(\frac{\int x\beta - \int X_i\beta}{h_n}\right)\right]'' \nonumber \\
\doteq &  \frac{1}{h_n^3}Y_iI\left(Y_i \in \left[-M_n,M_n\right]\right)H_2\left(x,X_i,\beta\right)K''\left(\frac{\int x\beta - \int X_i\beta}{h_n}\right), \nonumber
\end{align}
as $n \rightarrow \infty$.
\end{lemma}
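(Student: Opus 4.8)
\emph{Proof strategy.} The plan is to repeat verbatim the Bernstein-inequality argument used for Lemma 1, with the single factor $h_n$ appearing there replaced by $h_n^3$. First I would clear the $h_n^{-3}$ prefactor: exactly as in the proof of Lemma 1, the probability in question equals $\mathrm{P}\left(\sup_{C \times B}\left|\sum_{i=1}^n A_i\right| \geq nh_n^3\epsilon_{2n}\right)$, where $A_i$ is the centered version of $Y_iI\left(Y_i \in \left[-M_n,M_n\right]\right)H_2\left(x,X_i,\beta\right)K''\left(\frac{\int x\beta - \int X_i\beta}{h_n}\right)$.

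Next I would establish, uniformly over $C \times B$, the two ingredients Bernstein's inequality requires. For the envelope: by Assumption $4$ the kernel $K$ is three times continuously differentiable with $\left|K^{(3)}\right| \leq D_3$ on $\left[-1,1\right]$, so $K''$ is Lipschitz and hence bounded on its compact support; moreover on the support of $K''\left(\frac{\int x\beta - \int X_i\beta}{h_n}\right)$ one has $\left|\int x\beta - \int X_i\beta\right| \leq h_n$, which keeps the chain-rule factor $H_2\left(x,X_i,\beta\right)$ bounded uniformly (any polynomial in that difference is $O\left(1\right)$ there); and $\left|Y_iI\left(Y_i \in \left[-M_n,M_n\right]\right)\right| \leq M_n$. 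Hence $\left|A_i\right| \leq c_n$ with $c_n = 2M_nC_1$ for a constant $C_1$. For the variance, the same bounds give $\text{var}\left(A_i\right) \leq C_2M_n^2$, so $V_n \doteq nC_2M_n^2 \geq \sum_{i=1}^n\text{var}\left(A_i\right)$.

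Then I would apply Bernstein's inequality (Appendix A) with $\eta_n = nh_n^3\epsilon_{2n}$, obtaining the bound $\exp\left[-\dfrac{nh_n^6\epsilon_{2n}^2}{2C_2M_n^2 + \tfrac{4}{3}C_1M_nh_n^3\epsilon_{2n}}\right]$. As in Lemma 1, the constraint on $M_n$ inherited from Lemma $A.5$ of \cite{ichimura1993semiparametric} (here of the form $\epsilon_{2n}h_n^3M_n^{m-1} \rightarrow \infty$) forces $M_n \rightarrow \infty$ while $h_n^3\epsilon_{2n} \rightarrow 0$, so the second term in the denominator is $o\left(M_n^2\right)$ and the exponent is asymptotically $-\tfrac{1}{2C_2}nh_n^6\epsilon_{2n}^2M_n^{-2}$; under the hypothesis $nh_n^6\epsilon_{2n}^2M_n^{-2} \rightarrow \infty$ this tends to $-\infty$, so the probability tends to $0$. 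The identical computation with $h_n^3$ replaced by $h_n^2$ and $K''$ by $K'$ yields Lemma $2$.

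The step I expect to be the main obstacle is not the Bernstein estimate itself but verifying that the envelope and variance bounds can genuinely be taken uniformly over $C \times B$: one must check that $H_2\left(x,X_i,\beta\right)$ — the collection of chain-rule factors produced by differentiating $K\left(\frac{\int x\beta - \int X_i\beta}{h_n}\right)$ twice — stays bounded on the relevant set, and this is where the compact support of the kernel (confining $\left|\int x\beta - \int X_i\beta\right|$ to $\left[-h_n,h_n\right]$) together with Assumption $6$ (uniform boundedness of the density of $\int X\beta$) does the work. A secondary point is to confirm that the moment hypothesis of Lemma $A.5$ of \cite{ichimura1993semiparametric} is compatible with $nh_n^6\epsilon_{2n}^2M_n^{-2} \rightarrow \infty$, exactly as in the proof of Lemma $1$.
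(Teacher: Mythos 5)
Your proposal is correct and is exactly the argument the paper intends: the paper proves Lemma 3 only by the remark ``following similar arguments'' after Lemma 1, and your write-up is that same Bernstein-inequality computation with $h_n$ replaced by $h_n^3$, $\eta_n = nh_n^3\epsilon_{2n}$, the envelope $c_n = 2M_nC_1$ and variance bound $V_n = nC_2M_n^2$, and the hypothesis $nh_n^6\epsilon_{2n}^2M_n^{-2} \rightarrow \infty$ driving the exponent to $-\infty$. Your additional care about the uniform boundedness of the chain-rule factor $H_2$ is a reasonable point the paper leaves implicit, but it does not change the argument.
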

Now, we show the root-$n$ consistency of the estimator $\hat{\beta}$.
\begin{theorem}
4 The estimator $\hat{\beta}$ is consistent if Assumptions $1-6$ hold, and the bandwidth sequence satisfies $nh_n^8 \rightarrow 0$ and $nh_n^6 \rightarrow \infty$.
\end{theorem}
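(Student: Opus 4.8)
The plan is to read Theorem~4 as a standard consistency statement for the M-estimator $\hat\beta$ that minimizes the constrained criterion $J_n(\beta)=\frac1n\sum_{i=1}^n\left[Y_i-\hat g\left(\int X_i\beta\right)\right]^2$. The argument has three parts: (i) reduce $J_n$ to a version built from bounded data on an effectively finite-dimensional index set; (ii) prove a uniform law of large numbers $\sup_{\beta}\left|J_n(\beta)-J(\beta)\right|\to 0$ in probability for a population limit $J$; and (iii) show $\beta^0$ is the \emph{well-separated} minimizer of $J$. Granting (ii) and (iii), consistency is the textbook conclusion: $J_n(\hat\beta)\le J_n(\beta^0)$ forces $J(\hat\beta)\to J(\beta^0)=\inf J$, and well-separation then yields $\hat\beta\to\beta^0$.

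For step (i) I would apply the truncation device of \cite{ichimura1993semiparametric}: replace each $Y_i$ inside $\hat g$ by $Y_iI\left(|Y_i|\le M_n\right)$, choosing $M_n\to\infty$ (via the $m$th-moment bound of Assumption~2) slowly enough that $n\,\mathrm{E}|Y|^mM_n^{-m}\to 0$ — so the substitution is invisible with probability tending to one — yet slowly enough that the hypotheses $nh_n^2\epsilon_{0n}^2M_n^{-2}\to\infty$, $nh_n^4\epsilon_{1n}^2M_n^{-2}\to\infty$, $nh_n^6\epsilon_{2n}^2M_n^{-2}\to\infty$ of Lemmas~1--3 hold for suitable $\epsilon_{0n},\epsilon_{1n},\epsilon_{2n}$; this is exactly the role of $nh_n^6\to\infty$, which forces $h_n\gg n^{-1/6}$ and thereby leaves room to squeeze $M_n$ between $n^{1/m}$ and $\sqrt n\,h_n$. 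Then I would replace $\int X_i\beta$ by its truncation $\sum_{j\le p_n}c_{ij}b_j$: since the Nadaraya--Watson map is Lipschitz, $p_n=o(1/h_n)$, and the tail is $O(p_n^{-\lambda})$ by $\left(\ref{dim}\right)$, this change is uniformly negligible and the effective parameter becomes the compact sphere $\{b\in\mathbb{R}^{p_n}:\sum_{j\le p_n}b_j^2=1\}$; as Lemmas~1--3 already supply the supremum over all of $C\times B$, this step is largely conceptual.

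Step (ii) combines the kernel-moment expansions of $T_j^p(\beta)$ derived just before the lemmas (which give $T_j^0(\beta)=nh_nf(\int X_j\beta\,|\,\beta)+O(1)$ and the analogous numerators) with Lemmas~1--3 to obtain $\sup_{\beta,x}\left|\hat g\left(\int x\beta\right)-\bar g_\beta\left(\int x\beta\right)\right|\to 0$ in probability, where $\bar g_\beta(v)=\mathrm{E}[Y\mid \int X\beta=v]$; the derivative versions (Lemmas~2 and~3) are needed because controlling $J_n(\beta)-J_n(\beta^0)$ uniformly calls for a Taylor expansion of $\hat g\left(\int X_i\beta\right)$ about $\int X_i\beta^0$, whose first- and second-order terms carry $\hat g'$ and $\hat g''$. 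Feeding this into $J_n$ and applying an ordinary law of large numbers to the now-bounded summands gives $\sup_\beta\left|J_n(\beta)-J(\beta)\right|\to 0$ in probability with $J(\beta)=\mathrm{E}\left[\left(Y-\bar g_\beta\left(\int X\beta\right)\right)^2\right]$. For step (iii), since $\epsilon\perp X$ with variance $\sigma^2$, $\bar g_{\beta^0}\left(\int X\beta^0\right)=g\left(\int X\beta^0\right)$ so $J(\beta^0)=\sigma^2$, whereas for any other admissible $\beta$ — with $\int_0^1\beta^2=1$ and the identifiability conventions of \cite{ma2016estimation} — $\bar g_\beta\left(\int X\beta\right)$ is the $L^2$-projection of $Y$ onto a strictly coarser $\sigma$-field, so $J(\beta)>\sigma^2$; continuity of $\beta\mapsto J(\beta)$ on the constrained set upgrades this to $\inf_{\|\beta-\beta^0\|\ge\delta}J(\beta)>\sigma^2$ for each $\delta>0$.

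I expect the main obstacle to be the interplay of identification with uniformity. That the conditional-mean projection pins down $\beta^0$ is essentially the model's identifiability hypothesis — it needs $g$ non-constant and sufficient spread in the law of $X$ — and converting uniqueness into genuine well-separation over the unit sphere of the infinite-dimensional space $\mathcal{B}$ is precisely what forces the basis-approximation condition $\left(\ref{dim}\right)$ and the requirement $p_n=o(1/h_n)$. The remaining work is bookkeeping: choosing $M_n$ and the three scales $\epsilon_{0n},\epsilon_{1n},\epsilon_{2n}$ simultaneously so that every Taylor remainder of $\hat g$ is $o_p(1)$, which is where the upper bound $nh_n^8\to 0$ (negligibility of the smoothing bias at the rate demanded) must be reconciled with the lower bound $nh_n^6\to\infty$ and with the truncation constraints above.
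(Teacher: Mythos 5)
Your proposal is correct and takes essentially the same route as the paper: the paper's proof of Theorem 4 simply invokes Theorem 5.1 of \cite{ichimura1993semiparametric} --- whose argument is precisely the truncation / uniform-convergence / well-separated-minimizer scheme you describe --- checks that its Lemmas A.8--A.10 carry over to the functional setting as the paper's Lemmas 1--3, and tallies the resulting bandwidth constraints. One small imprecision: your identification step justifies $J(\beta)>\sigma^2$ by calling $\sigma\left(\int X\beta\right)$ a ``strictly coarser'' $\sigma$-field, whereas the two $\sigma$-fields are in general incomparable; the correct statement is $J(\beta)=\sigma^2+\mathrm{E}\left[\mathrm{Var}\left(g\left(\int X\beta^0\right)\mid\int X\beta\right)\right]$, and that this exceeds $\sigma^2$ for $\beta\neq\pm\beta^0$ is exactly the identifiability hypothesis that the paper (following Ichimura) assumes rather than proves.
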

\begin{proof}
Theorem $5.1$ in \cite{ichimura1993semiparametric} states the consistency of $\hat{\beta}$, while $\beta^0$ is a coefficient vector. Theorem $5.1$ is based on Lemma $5.1$, and Lemma $5.1$ is based on Lemmas $A.2 - A.10$. The proof of Lemmas $A.2 - A.7$ will be the same whenever $\beta^0$ is a vector or a function. Lemmas $1 - 3$ above are the functional version of Lemmas $A.8 - A.10$. We need to figure out the constraint on the smoothing parameter $h_n$. The constraints are
\begin{itemize}
\item Lemma $A.2 - A.4$ 
\begin{align}
nh_n^8 \rightarrow 0.\nonumber
\end{align}
\item Lemma $A.5 - A.7$
\begin{align}
\epsilon_{0n}h_nM_n^{m-1} \rightarrow \infty, \quad \epsilon_{1n}h_n^2M_n^{m-1} \rightarrow \infty,\quad \epsilon_{2n}h_n^3M_n^{m-1} \rightarrow \infty.\nonumber
\end{align}
\item Lemma $A.8 - A.10$
\begin{align}
n\epsilon_{0n}^2h_n^2M_n^{-2} \rightarrow \infty, \quad n\epsilon_{1n}^2h_n^4M_n^{-2} \rightarrow \infty,\quad n\epsilon_{2n}^2h_n^6M_n^{-2} \rightarrow \infty.\nonumber
\end{align}
\end{itemize}
We require $nh_n^2\epsilon_{0n}^2M_n^{-2} \rightarrow \infty$. Since $\epsilon_{0n} \rightarrow 0$ and $M_n^{-2} \rightarrow 0$, we need to have $nh_n^2 \rightarrow \infty$. Following the same argument, we need $nh_n^4 \rightarrow \infty$ and $nh_n^6 \rightarrow \infty$.
\end{proof}
To prove the convergence rate of the functional single index model, we need to find a convergence rate for $\hat{\beta}$, $\hat{\bm{b}}$ or $\int X\hat{\beta}$.
\begin{theorem}
5 Suppose that $nh_n^6 \rightarrow \infty$, $nh_n^8 \rightarrow 0$ and $\frac{nh_n^{3+\frac{3}{m-1}}}{-\log h_n} \rightarrow \infty$, then we have
\begin{align}
\sqrt{n}\left(\int X_{i}\hat{\beta} - \int X_{i}\beta^0\right) = O_p\left(1\right),\nonumber
\end{align}
for $i = 1,\cdots,n$.
\end{theorem}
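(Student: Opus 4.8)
The plan is to follow the template of Ichimura's (1993) Theorem~5.2 for the semiparametric least-squares estimator, working in the finite-dimensional reduction supplied by Assumption~5. Write $\hat{\bm b} = (\hat b_1,\dots,\hat b_{p_n})^{\top}$ and $\bm b^0 = (b_1^0,\dots,b_{p_n}^0)^{\top}$ for the truncated coefficient vectors, so that $\int X_i\beta = \bm c_i^{\top}\bm b + r_i(\bm b)$ with remainder $r_i(\bm b) = O(p_n^{-\lambda})$ uniformly by $(\ref{dim})$. Since $\hat\beta$ minimizes $J_n$ over $\mathcal B$ subject to $\int_0^1\beta^2 = 1$, and $\hat\beta$ is consistent by Theorem~4, it suffices to analyze the stationarity condition $\nabla_{\bm b} J_{n,p_n}(\hat\beta) = \bm 0$ on the tangent space of the normalization sphere and Taylor-expand it about $\bm b^0$.

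First I would compute the score $\nabla_{\bm b} J_{n,p_n}(\beta) = -\tfrac2n\sum_{i=1}^n [Y_i - \hat g(\int X_i\beta)]\,\nabla_{\bm b}\hat g(\int X_i\beta)$. The essential point is that $\nabla_{\bm b}\hat g$ has two contributions --- one through the evaluation point $\int X_i\beta$ and one through the kernel weights defining the Nadaraya--Watson estimator $(\ref{step2})$ --- and these combine, in expectation, to the recentered quantity $g'(\int X_i\beta^0)\,(\bm c_i - \mathrm E[\bm c_i \mid \int X_i\beta^0])$. This recentering is what makes the score at $\bm b^0$ a sum of i.i.d.\ mean-zero terms $-\tfrac2n\sum_i \epsilon_i\, g'(\int X_i\beta^0)(\bm c_i - \mathrm E[\bm c_i\mid\int X_i\beta^0])$ plus asymptotically negligible pieces, hence $O_p(n^{-1/2})$. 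The Nadaraya--Watson smoothing bias contributes $O(h_n^2)$ per coordinate to the score, and $nh_n^8\to 0$ forces its effect on $\hat{\bm b}-\bm b^0$ to be $o_p(n^{-1/2})$; the stochastic remainder from replacing $\hat g,\hat g',\hat g''$ by their expectations is controlled uniformly over a neighborhood of $\bm b^0$ by Lemmas~1--3 together with the bandwidth conditions $nh_n^6\to\infty$ and $\tfrac{nh_n^{3+3/(m-1)}}{-\log h_n}\to\infty$, the latter being exactly the rate needed for the $\hat g''$-type term after the $M_n$-truncation dictated by the $m$th-moment Assumption~2.

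Next I would show that the Hessian $\nabla^2_{\bm b} J_{n,p_n}(\beta^0)$ converges to $V := 2\,\mathrm E\big[g'(\int X\beta^0)^2 (\bm c - \mathrm E[\bm c\mid\int X\beta^0])(\bm c - \mathrm E[\bm c\mid\int X\beta^0])^{\top}\big]$, which is nonnegative definite and, under the model's identifiability, positive definite on the relevant subspace. Combined with a uniform (in $\bm b$ near $\bm b^0$) bound on the third-order remainder --- again from the uniform-convergence lemmas --- this yields $\hat{\bm b} - \bm b^0 = -V^{-1}\nabla_{\bm b}J_{n,p_n}(\beta^0) + o_p(n^{-1/2}) = O_p(n^{-1/2})$. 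Finally, $\int X_i\hat\beta - \int X_i\beta^0 = \bm c_i^{\top}(\hat{\bm b}-\bm b^0) + r_i(\hat{\bm b}) - r_i(\bm b^0)$; the first term is $O_p(n^{-1/2})$ by Cauchy--Schwarz and $\mathrm E\|\bm c_i\|^2<\infty$, and the truncation terms are $O(p_n^{-\lambda})$, which is $o(n^{-1/2})$ for $p_n$ chosen to grow fast enough within the range $p_n = o(1/h_n)$ permitted by the bandwidth conditions. This gives $\sqrt n(\int X_i\hat\beta - \int X_i\beta^0) = O_p(1)$.

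I expect the main obstacle to be the uniformity: establishing that the score-and-Hessian expansion holds not merely pointwise at $\bm b^0$ but uniformly over a shrinking neighborhood, so that the stationarity condition can legitimately be evaluated at the data-dependent $\hat{\bm b}$. This is precisely where the derivative analogues of Lemmas~1--3 and the delicate condition $\tfrac{nh_n^{3+3/(m-1)}}{-\log h_n}\to\infty$ are used, and where one must carefully track the interaction between the truncation level $p_n$, the bandwidth $h_n$, and the moment-truncation level $M_n$.
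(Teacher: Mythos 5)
Your proposal takes essentially the same route as the paper: both reduce to the $p_n$-dimensional truncation supplied by Assumption 5, obtain $\sqrt{n}$-asymptotics for $\hat{\bm{b}}_{p_n}-\bm{b}^0_{p_n}$ via an Ichimura-type score/Hessian (sandwich) argument (the paper by citing Lemma 5.4 of \cite{ichimura1993semiparametric}, you by rederiving the expansion from the first-order condition, including the conditional recentering of $\bm{c}_i$ that the paper leaves implicit), and then transfer the rate to $\int X_i\hat{\beta}$ through $\bm{c}_{i,p_n}^{\top}\left(\hat{\bm{b}}_{p_n}-\bm{b}^0_{p_n}\right)$ plus the $O\left(p_n^{-\lambda}\right)$ truncation error. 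The only notable divergence is in how the asymptotic variance is controlled: you treat the Hessian limit $V$ as a fixed positive-definite matrix, whereas the paper tracks the orders $\mathbb{V}_{km}\sim\mathbb{\Sigma}_{km}\sim h_n^{-2}\,\text{cov}\left(c_{\cdot k},c_{\cdot m}\right)$ (via the bound $\left|\hat{g}'\right|\lesssim h_n^{-1}$) and uses $p_n=o\left(1/h_n\right)$ to conclude that the sandwich variance $p_n^2h_n^2$ is $o\left(1\right)$ --- a step your version does not require at that point.
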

\begin{proof}
For each $p_n$ such that $p_n \rightarrow \infty$ as $n \rightarrow \infty$, define
\begin{align}
\hat{\bm{b}}_{p_n} = \left(\hat{b}_1,\cdots,\hat{b}_{p_n}\right), \text{ and, }\bm{b}_{p_n}^0 = \left(b_1^0,\cdots,b_{p_n}^0\right).\nonumber
\end{align}
Since the kernel function $K$ satisfies the Lipschitz condition, and the estimated link function $g$ is
\begin{align}
\hat{g}\left(\int X_i\beta\right) = \frac{\sum\limits_{j:j \neq i} Y_jK\left(\frac{\int X_i\beta - \int X_j\beta}{h_n}\right)}{\sum\limits_{j:j \neq i} K\left(\frac{\int X_i\beta - \int X_j\beta}{h_n}\right)},\nonumber
\end{align}
the estimated $\hat{g}$ also satisfies the Lipschitz condition. Since $\sum\limits_{j=p_n+1}^{\infty} c_{ij}b_j = O\left(p_n^{-\lambda}\right)$, we have
\begin{align}
\left|\hat{g}\left(\sum\limits_{j=1}^{\infty} c_{ij}b_j\right) - \hat{g}\left(\sum\limits_{j=1}^{p_n} c_{ij}b_j\right)\right| \leq K_2\left|\sum\limits_{j=p_n+1}^{\infty} c_{ij}b_j\right| = O\left(p_n^{-\lambda}\right), \label{ye}
\end{align}
as $n \rightarrow \infty$, where $K_2$ is a constant. Therefore, $\hat{g}\left(\sum\limits_{j=1}^{p_n} c_{ij}b_j\right) \rightarrow \hat{g}\left(\sum\limits_{j=1}^{\infty} c_{ij}b_j\right)$. By the definition of $J_{n,p_n}\left(\beta\right)$, we can get $J_{n,p_n}\left(\beta\right) \rightarrow J_n\left(\beta\right)$ as $n \rightarrow \infty$, for any $\beta \in \mathcal{B}$. \\
Hence, by Lemma $5.4$ in \cite{ichimura1993semiparametric}, we have
\begin{align}
\sqrt{n}\left(\hat{\bm{b}}_{p_n} - \bm{b}^0_{p_n}\right) \rightarrow D \doteq \mathrm{N}\left(\bm{0}_{p_n}, \mathbb{V}^{-1}\mathbb{\Sigma}\mathbb{V}^{-1}\right),\nonumber
\end{align}
where $\bm{0}_{p_n}$ is a $p_n$-dimensional mean vector, and $\mathbb{V}^{-1}\mathbb{\Sigma}\mathbb{V}^{-1}$ is a $\left(p_n \times p_n\right)$-dimensional covariance matrix. Suppose that $\mathcal{X}$ is the $\sigma$-algebra generated by $\left(X_1,\cdots,X_n\right)$, the $\left(k,m\right)$th-term of the matrix $\mathbb{V}$ is 
\begin{align}
\mathbb{V}_{km} =& \mathrm{E}\left[\frac{\partial^2 J_{n,p_n}\left(\beta^0\right)}{\partial b^0_k \partial b^0_m}\left|\mathcal{X}\right.\right] \nonumber\\
=& \mathrm{E}\left\{\frac{2}{n}\sum\limits_{i=1}^n\left[\hat{g}'\left(\sum\limits_{j=1}^{p_n} c_{ij}b^0_j\right)\right]^2c_{ik}c_{im} - \frac{2}{n}\sum\limits_{i=1}^n\left[Y_i - \hat{g}\left(\sum\limits_{j=1}^{p_n} c_{ij}b^0_j\right)\right]\hat{g}''\left(\sum\limits_{j=1}^{p_n} c_{ij}b^0_j\right)c_{ik}c_{im}\left|\mathcal{X}\right.\right\}  \nonumber \\
=& \mathrm{E}\left\{\frac{2}{n}\sum\limits_{i=1}^n\left[\hat{g}'\left(\sum\limits_{j=1}^{p_n} c_{ij}b^0_j\right)\right]^2c_{ik}c_{im}\left|\mathcal{X}\right.\right\} - \mathrm{E}\left\{\frac{2}{n}\sum\limits_{i=1}^n\left[Y_i - \hat{g}\left(\sum\limits_{j=1}^{p_n} c_{ij}b^0_j\right)\right]\hat{g}''\left(\sum\limits_{j=1}^{p_n} c_{ij}b^0_j\right)c_{ik}c_{im}\left|\mathcal{X}\right.\right\}.  \nonumber
\end{align}
For any $i = 1,\cdots,n$, we have
\begin{align}
\mathrm{E}\left(Y_i\left|\mathcal{X}\right.\right) = g\left(\int X_i\beta^0\right) = \mathrm{E}\left[\hat{g}\left(\int X_i\beta^0\right)\left|\mathcal{X}\right.\right] + O\left(h_n^2\right), \label{kk}
\end{align}
where the second equality is the bias property of the kernel density estimate. We can calculate
\begin{align}
& \mathrm{E}\left\{\frac{1}{n}\sum\limits_{i=1}^n\left[Y_i - \hat{g}\left(\sum\limits_{j=1}^{p_n} c_{ij}b^0_j\right)\right]\left|\mathcal{X}\right.\right\} \nonumber\\
& \hspace{.4cm} =  \mathrm{E}\left\{\frac{1}{n}\sum\limits_{i=1}^n\left[Y_i - \hat{g}\left(\int X_i\beta^0\right)\right]\left|\mathcal{X}\right.\right\} +  \mathrm{E}\left\{\frac{1}{n}\sum\limits_{i=1}^n\left[\hat{g}\left(\int X_i\beta^0\right) - \hat{g}\left(\sum\limits_{j=1}^{p_n} c_{ij}b^0_j\right)\right]\left|\mathcal{X}\right.\right\}, \nonumber 
\end{align}
where the first term converges to $0$ by the equation $\left(\ref{kk}\right)$, since $h_n \rightarrow 0$ as $n \rightarrow \infty$, and the second term converges to $0$ by $\left(\ref{ye}\right)$. Therefore, we have
\begin{align}
\mathrm{E}\left\{\frac{1}{n}\sum\limits_{i=1}^n\left[Y_i - \hat{g}\left(\sum\limits_{j=1}^{p_n} c_{ij}b^0_j\right)\right]\left|\mathcal{X}\right.\right\} \rightarrow 0,\nonumber
\end{align}
as $n \rightarrow \infty$. By the Slutsky's Theorem, we can get
\begin{align}
\mathrm{E}\left\{\frac{2}{n}\sum\limits_{i=1}^n\left[Y_i - \hat{g}\left(\sum\limits_{j=1}^{p_n} c_{ij}b^0_j\right)\right]\hat{g}''\left(\sum\limits_{j=1}^{p_n} c_{ij}b^0_j\right)c_{ik}c_{im}\left|\mathcal{X}\right.\right\} \rightarrow 0.\nonumber
\end{align}
Therefore,
\begin{align}
\mathbb{V}_{km} =& \mathrm{E}\left\{\frac{2}{n}\sum\limits_{i=1}^n\left[\hat{g}'\left(\sum\limits_{j=1}^{p_n} c_{ij}b^0_j\right)\right]^2c_{ik}c_{im}\left|\mathcal{X}\right.\right\}. \nonumber
\end{align}
For any $\beta \in \mathcal{B}$ and any $j \in \left\{1,\cdots,n\right\}$, we have
\begin{align}
\left|\hat{g}'\left(\int X_j\beta\right)\right| =& \left|S_1\left(\beta;j\right)\bm{Y}\right| \nonumber \\
=& \left|\frac{\left[T_j^0\left(\beta\right)T_j^4\left(\beta\right)-\left(T_j^2\left(\beta\right)\right)^2\right]\sum\limits_{i=1}^n\left(\int X_i\beta-\int X_j\beta\right)K\left(\frac{\int X_i\beta-\int X_j\beta}{h_n}\right)Y_i}{T_j^0\left(\beta\right)T_j^2\left(\beta\right)T_j^4\left(\beta\right)-\left(T_j^2\left(\beta\right)\right)^3}\right| \nonumber \\
\leq & \left|\frac{\left[T_j^0\left(\beta\right)T_j^4\left(\beta\right)-\left(T_j^2\left(\beta\right)\right)^2\right]T_j^1\left(\beta\right)\left|\max_{i \in \left\{1,\cdots,n\right\}}Y_i\right|}{T_j^0\left(\hat{\beta}\right)T_j^2\left(\beta\right)T_j^4\left(\beta\right)-\left(T_j^2\left(\beta\right)\right)^3}\right| \nonumber \\
\approx & \left|\frac{\left[nh_nf\left(u\left|\beta\right.\right)nh_n^5f\left(u\left|\beta\right.\right)\mu_4\left(K\right)-\left(nh_n^3f\left(u\left|\beta\right.\right)\mu_2\left(K\right)\right)^2\right]nh_n^2f\left(u\left|\beta\right.\right)\mu_1\left(K\right)\left|\max_{i \in \left\{1,\cdots,n\right\}}Y_i\right|}{nh_nf\left(u\left|\beta\right.\right)nh_n^3f\left(u\left|\beta\right.\right)\mu_2\left(K\right)nh_n^5f\left(u\left|\beta\right.\right)\mu_4\left(K\right)-\left(nh_n^3f\left(u\left|\beta\right.\right)\mu_2\left(K\right)\right)^3}\right| \nonumber \\
\sim & \frac{1}{h_n}, \nonumber
\end{align}
where $u = \int X_j\beta$.\\
For any $i = 1,\cdots,n$, denote $c_{\cdot \cdot} = c_{ik}$, for any $k \in \left\{1,\cdots,n\right\}$, and $c_{\cdot k} = c_{ik}$ for a fix $k$. For any $k,m \in \left\{1,\cdots,n\right\}$, we have
\begin{align}
\mathbb{V}_{km} \sim \frac{\mathrm{E}\left(c_{ik}c_{im}\left|\mathcal{X}\right.\right)}{h_n^2} \sim \frac{\text{cov}\left(c_{ik},c_{im}\right)}{h_n^2} \sim \frac{\text{cov}\left(c_{\cdot k},c_{\cdot m}\right)}{h_n^2},\nonumber
\end{align}
since $X_i$ are independent and identically distributed with $\mathrm{E}\left(c_{ij}\right) = 0$, for any $i=1,\cdots,n$ and $j=1,\cdots$. \\
The $\left(k,m\right)$th-term of the matrix $\mathbb{\Sigma}$ is
\begin{align}
\mathbb{\Sigma}_{km} =& \mathrm{E}\left[\sigma \frac{\partial J_{n,p_n}\left(\beta^0\right)}{\partial b^0_k}\cdot \sigma\frac{\partial J_{n,p_n}\left(\beta^0\right)}{\partial b^0_m}\right] \nonumber\\
=& \mathrm{E}\left[\frac{2\sigma}{n}\sum\limits_{i=1}^n\left[Y_i - \hat{g}\left(\sum\limits_{j=1}^{p_n} c_{ij}b^0_j\right)\right]\hat{g}'\left(\sum\limits_{j=1}^{p_n} c_{ij}b^0_j\right)c_{ik} \cdot \frac{2\sigma}{n}\sum\limits_{i=1}^n\left[Y_i - \hat{g}\left(\sum\limits_{j=1}^{p_n} c_{ij}b^0_j\right)\right]\hat{g}'\left(\sum\limits_{j=1}^{p_n} c_{ij}b^0_j\right)c_{im}\right] \nonumber \\
\sim & \frac{\text{cov}\left(c_{\cdot k},c_{\cdot m}\right)}{h_n^2}. \nonumber 
\end{align}
The diagonal term of the covariance matrix $\mathbb{V}^{-1}\mathbb{\Sigma}\mathbb{V}^{-1}$ is 
\begin{align}
\left(\mathbb{V}^{-1}\mathbb{\Sigma}\mathbb{V}^{-1}\right)_{kk} \sim \frac{p_n^2h_n^2}{\text{var}\left(c_{\cdot\cdot}\right)}.\nonumber
\end{align}
Define the truncated version of $X_i\left(t\right)$ and $\beta\left(t\right)$ as
\begin{align}
X_{i,p_n}\left(t\right) = \sum\limits_{j=1}^{p_n} c_{ij}\phi_j\left(t\right), \nonumber\\
\beta_{p_n}\left(t\right) = \sum\limits_{j=1}^{p_n} b_j\phi_j\left(t\right),\nonumber
\end{align}
where $i = 1, \cdots, n$. \\
We have for any $i = 1, \cdots, n$, define $\bm{c}_{i,p_n} = \left(c_{i1},\cdots,c_{ip_n}\right)$,
\begin{align}
\sqrt{n}\left(\int X_{i,p_n}\hat{\beta}_{p_n} - \int X_{i,p_n}\beta^0_{p_n}\right) = \sqrt{n}\left(\bm{c}_{i,p_n}\hat{\bm{b}}_{p_n} - \bm{c}_{i,p_n}\bm{b}^0_{p_n}\right) = \bm{c}_{i,p_n}  \sqrt{n}\left(\hat{\bm{b}}_{p_n} - \bm{b}^0_{p_n}\right).\nonumber
\end{align}
Therefore, $\sqrt{n}\left(\int X_{i,p_n}\hat{\beta}_{p_n} - \int X_{i,p_n}\beta^0_{p_n}\right)$ converges to a normal distribution with the covariance matrix 
$\text{var}\left(c_{\cdot\cdot}\right)\cdot \frac{p_n^2h_n^2}{\text{var}\left(c_{\cdot\cdot}\right)} = p_n^2h_n^2$. By Assumption $5$, since $p_n = o_p\left(\frac{1}{h_n}\right)$, we can get $\text{var}\left(c_{\cdot\cdot}\right)\cdot \frac{p_n^2h_n^2}{\text{var}\left(c_{\cdot\cdot}\right)} = o_p\left(1\right)$. When $n \rightarrow \infty$, $p_n \rightarrow \infty$, we have
\begin{align}
\sqrt{n}\left(\int X_{i}\hat{\beta} - \int X_{i}\beta^0\right) = O_p\left(1\right),\nonumber
\end{align}
for $i = 1,\cdots,n$.
\end{proof}

\subsection{Main Theorem}
\begin{theorem}
6 If $nh_n^6 \rightarrow \infty$, $nh_n^8 \rightarrow 0$ and $\frac{nh_n^{3+\frac{3}{m-1}}}{-\log h_n} \rightarrow \infty$, we have
\begin{align}
\frac{1}{n}\sum\limits_{i=1}^n\mathrm{E}\left[\hat{g}''\left(\int X_i\hat{\beta}\right) - g''\left(\int X_i\beta^0\right)\right]^2 = O\left(h_n^4 + \frac{1}{nh_n^4}\right).\nonumber
\end{align}
\end{theorem}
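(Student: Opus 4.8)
The plan is to control, for each $i$, the error $\hat g''(\int X_i\hat\beta)-g''(\int X_i\beta^0)$ through a three‑way split and then average. Writing $\bar g_i=\mathrm E[\hat g''(\int X_i\beta^0)\mid\mathcal X]$,
\begin{align}
\hat g''\left(\int X_i\hat\beta\right) - g''\left(\int X_i\beta^0\right)
&= \left[\hat g''\left(\int X_i\hat\beta\right) - \hat g''\left(\int X_i\beta^0\right)\right]
+ \left[\hat g''\left(\int X_i\beta^0\right) - \bar g_i\right] \nonumber\\
&\quad + \left[\bar g_i - g''\left(\int X_i\beta^0\right)\right]
\;=\; (\mathrm{I})_i + (\mathrm{II})_i + (\mathrm{III})_i, \nonumber
\end{align}
so that $(\hat g''(\int X_i\hat\beta)-g''(\int X_i\beta^0))^2\le 3[(\mathrm{I})_i^2+(\mathrm{II})_i^2+(\mathrm{III})_i^2]$ and it suffices to bound each of $\frac1n\sum_i\mathrm E(\mathrm{I})_i^2$, $\frac1n\sum_i\mathrm E(\mathrm{II})_i^2$, $\frac1n\sum_i\mathrm E(\mathrm{III})_i^2$ by $O(h_n^4+1/(nh_n^4))$. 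As a preliminary, exactly as in Lemmas 1--3, I would first replace every $Y_j$ inside $\hat g''$ by the truncated response $Y_jI(|Y_j|\le M_n)$: Assumption 2 together with the bandwidth condition $\frac{nh_n^{3+3/(m-1)}}{-\log h_n}\to\infty$ makes the truncated tail negligible in mean square, and on the truncated event the $O_p$ statements below become genuine $L^2$ bounds.

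Term $(\mathrm{III})_i$ is the bias. Using the sampling‑operator representation $\hat g''(\int X_j\beta^0)=S_2(\beta^0;j)\bm Y$ and the identities $S_2\bm 1=0$, $S_2(\int\bm X\beta^0-(\int X_j\beta^0)\bm 1)=0$, $S_2\tfrac12(\int\bm X\beta^0-(\int X_j\beta^0)\bm 1)^2=1$ (i.e.\ $S_2\mathbb X_{\beta^0,j}=e_3^\top$), a second‑order Taylor expansion of $g$ around $\int X_j\beta^0$ with Lipschitz remainder (Assumption 3) reproduces the quadratic head exactly; the odd‑order part of the remainder is annihilated because the odd moments $T_j^1,T_j^3,T_j^5$ vanish by symmetry of $K$, and $S_2$ applied to what survives — whose entries are $(u_i-u_j)^2$ times a factor $O(|u_i-u_j|)$ on the kernel support — is $O(h_n^2)$; the ordinary kernel‑smoothing bias already present in $\bar g_i$ is likewise $O(h_n^2)$. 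Hence $(\mathrm{III})_i^2=O(h_n^4)$ uniformly in $i$.

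Term $(\mathrm{II})_i$ is the stochastic fluctuation. Writing $\bm Y=g(\int\bm X\beta^0)+\bm\epsilon$ gives $(\mathrm{II})_i=S_2(\beta^0;i)\bm\epsilon$ up to the negligible truncation correction, and I would control its second moment using Lemma 3 — which is precisely a Bernstein bound, uniform over $C\times B$, on the centred sums that build $\hat g''$ — together with the orders $T_j^0\sim nh_nf$, $T_j^2\sim nh_n^3f\mu_2(K)$, $T_j^4\sim nh_n^5f\mu_4(K)$ derived above and the fact that $|T_j^0T_j^4-(T_j^2)^2|$ stays of order $n^2h_n^6$, the determinant being kept away from degeneracy by Assumption 6 (uniformly bounded density of $\int X\beta$). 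This yields $\frac1n\sum_i\mathrm E(\mathrm{II})_i^2=O(1/(nh_n^4))$.

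The main obstacle is term $(\mathrm{I})_i$, the effect of estimating $\beta^0$. The plan is to show that $\hat g''(\cdot)$, viewed as a function of its scalar argument with the data held fixed, is Lipschitz with a constant $L_n=O_p(h_n^{-2})$ on a neighbourhood containing both $\int X_i\beta^0$ and $\int X_i\hat\beta$: differentiating the ratio formula for $\hat g''$ in its argument brings down one extra factor $K^{(3)}/h_n$ relative to $\hat g$ (bounded via Assumption 4), which combines with the $h_n^{-2}$‑type scaling already carried by $S_2$. Theorem 5 then supplies $|\int X_i\hat\beta-\int X_i\beta^0|=O_p(n^{-1/2})$, so $(\mathrm{I})_i^2=O_p(L_n^2/n)=O_p(1/(nh_n^4))$. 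The delicate points, and the reason the proof needs the apparatus already set up, are: (a) the Lipschitz estimate must hold \emph{uniformly} over the random interval between the two arguments and over $\beta$ in a shrinking neighbourhood of $\beta^0$, which is exactly why the suprema over $C\times B$ are built into Lemmas 1--3; and (b) the local‑quadratic design matrix $\mathbb X_{\beta,j}^\top\mathbb K_{\beta,j}\mathbb X_{\beta,j}$ must stay uniformly invertible, which relies on Assumption 6 and on the truncation $p_n=o(1/h_n)$ of Assumption 5 so that $\int X\beta$ is uniformly well approximated by $\bm c_{i,p_n}\bm b_{p_n}$. Putting the three bounds together, averaging over $i$, and observing that the hypotheses $nh_n^6\to\infty$ and $nh_n^8\to0$ are precisely what keep $(\mathrm{I})$, $(\mathrm{II})$ and $(\mathrm{III})$ simultaneously within $O(h_n^4+1/(nh_n^4))$, finishes the proof.
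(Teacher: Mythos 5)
Your decomposition is genuinely different from the paper's, even though both are three\-/term splits around an intermediate ``smoothed truth.'' The paper anchors at $\hat\beta$: it defines $\bar g''(\int X_j\hat\beta)=S_2(\hat\beta;j)\bm g$ and splits into $S_2(\hat\beta;j)\bm\epsilon$ (Lemma 7, variance, $O(1/(nh_n^4))$), the smoothing bias $\bar g''-g''(\int X_j\hat\beta)$ (Lemma 8, $O(h_n^4+1/(nh_n^4))$), and only at the very end transfers the argument from $\hat\beta$ to $\beta^0$ inside the \emph{true} $g''$, which costs $O(1/n)$ by Assumption 3 plus Theorem 5 (Lemma 9). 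You instead anchor at $\beta^0$ and push the $\hat\beta$-versus-$\beta^0$ discrepancy into your term (I), which forces you to prove a Lipschitz bound on the \emph{random estimator} $\hat g''(\cdot)$ in its scalar argument, uniformly over the relevant range, with constant $L_n=O_p(h_n^{-2})$. That step is exactly what the paper's decomposition is engineered to avoid (the Lipschitz property it uses is an assumption on $g''$, not a fact to be proved about $S_2\bm Y$), and it is the one piece of your argument that remains a sketch: you would need to differentiate the ratio defining $S_2$, control numerator and denominator uniformly in $u$ and $\beta$ via Assumptions 4 and 6, and convert the resulting $O_p$ statement into an $L^2$ bound through the truncation device — none of which is supplied by Lemmas 1--3 as stated, which bound centred sums rather than derivatives of ratios. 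In compensation, your anchoring at $\beta^0$ buys something real: your terms (II) and (III) involve the smoother $S_2(\beta^0;i)$, which is measurable with respect to $\mathcal X$, so $\mathrm E[(S_2(\beta^0;i)\bm\epsilon)^2\mid\mathcal X]$ is a legitimate conditional variance computation; the paper's Lemma 7 evaluates $\mathrm E[S_2(\hat\beta;j)\bm\epsilon]^2$ treating $S_2(\hat\beta;j)$ as if it were independent of $\bm\epsilon$, even though $\hat\beta$ is built from the same $Y_i$'s, a dependence your version sidesteps. The term-by-term rates ($O(1/(nh_n^4))$ for the fluctuation, $O(h_n^4)$ for the bias, and an $o$-smaller contribution from the $\hat\beta$ transfer) agree with the paper's, so if you carry out the uniform Lipschitz estimate for $\hat g''$ carefully your route closes and yields the same conclusion.
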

\begin{proof}
Since $X_i$ are independent and identically distributed, for any $j \in \left\{1,\cdots,n\right\}$, we only need to find the convergence rate of $\mathrm{E}\left[\hat{g}''\left(\int X_j\hat{\beta}\right) - g''\left(\int X_j\beta^0\right)\right]^2$. We can decompose it into three terms:
\begin{align}
&\mathrm{E}\left[\hat{g}''\left(\int X_j\hat{\beta}\right) - g''\left(\int X_j\beta^0\right)\right]^2 \nonumber \\
& \hspace{.4cm}\leq  \mathrm{E}\left[\hat{g}''\left(\int X_j\hat{\beta}\right) - \bar{g}''\left(\int X_j\hat{\beta}\right)\right]^2 + \mathrm{E}\left[\bar{g}''\left(\int X_j\hat{\beta}\right) - g''\left(\int X_j\hat{\beta}\right)\right]^2 \nonumber \\
& \hspace{.8cm} + \mathrm{E}\left[g''\left(\int X_j\hat{\beta}\right) - g''\left(\int X_j\beta^0\right)\right]^2. \nonumber 
\end{align}
where 
\begin{align}
\bar{g}''\left(\int X_j\hat{\beta}\right) \doteq S_2\left(\hat{\beta};j\right)\bm{g},\nonumber
\end{align}
and
\begin{align}
\bm{g} \doteq \left(g\left(\int X_1\beta^0\right),\cdots,g\left(\int X_n\beta^0\right)\right)^{\top}.\nonumber
\end{align}
By Lemma $7$, $8$ and $9$ in the Appendix, we have
\begin{align}
&\mathrm{E}\left[\hat{g}''\left(\int X_j\hat{\beta}\right) - \bar{g}''\left(\int X_j\hat{\beta}\right)\right]^2  = O\left(\frac{1}{nh_n^4}\right).\nonumber\\
&\mathrm{E}\left[\bar{g}''\left(\int X_j\hat{\beta}\right) - g''\left(\int X_j\hat{\beta}\right)\right]^2 = O\left(h_n^4 + \frac{1}{nh_n^4}\right).\nonumber\\
&\mathrm{E}\left[g''\left(\int X_j\hat{\beta}\right) - g''\left(\int X_j\beta^0\right)\right]^2 = O\left(\frac{1}{n}\right).\nonumber
\end{align}
Combining these three terms, we obtain
\begin{align}
\frac{1}{n}\sum\limits_{i=1}^n\mathrm{E}\left[\hat{g}''\left(\int X_i\hat{\beta}\right) - g''\left(\int X_i\beta^0\right)\right]^2 = O\left(h_n^4 + \frac{1}{nh_n^4}\right). \nonumber
\end{align}
\end{proof}

\section{Practical Implementation}
\subsection{Initialization} 
In $\left(\ref{step1}\right)$, the coefficient function $\beta^0$ is estimated by minimizing the mean square error of $g$, which is a nonlinear optimization problem. The coefficient function is approximated by a $K$-dimensional Fourier basis, as
\begin{align}
\beta^0\left(t\right) = \bm{\psi}^{\top}\left(t\right)\bm{c},\nonumber
\end{align}
where $\bm{c}$ is a $K$-dimensional column vector, and $t \in \left[0,1\right]$. In order to ensure identifiability, we constraint the coefficient function $\beta^0$ to have $\int \beta^0 = 0$ and $\|\beta^0\|_2 = 1$. Since the Fourier basis is an orthonormal basis, a constraint on $\beta^0$ is equivalent to a constraint on the coefficient vector, such that $\left\|\bm{c}\right\|_2 = 1$. The first constraint can be enforced by dropping any constant terms from the Fourier basis. To compensate for rescaling $\bm{c}$, we also rescale the bandwidth $h$ such that $h = h\left\|\bm{c}\right\|$ in the optimization step. \\
We use {\tt R} function {\tt optim} to minimize the MSE, and an initial value of the coefficient vector $\bm{c}$ is needed, denoted as $\bm{c}_{\text{init}}$. We use three different methods to select the initial value.
\begin{enumerate}
\item Assume each item in $\bm{c}_{\text{init}}$ is equal, such that $\bm{c}_{\text{init}} = \left(\frac{1}{\sqrt{K}},\cdots,\frac{1}{\sqrt{K}}\right)$. We don't have any previous knowledge about the start point, so for simplicity, choose a vector while all items are equal to each other.  
\item Assume $g\left(s\right) = s$, the coefficient vector $\bm{c}_{\text{init}}$ is estimated by minimizing the ordinary least square, and normalize it to be $\left\|\bm{c}_{\text{init}}\right\|_2 = 1$. To obtain an initial for $\beta^0$, we need to specify a structure for the link function $g$. A linear structure of $g$ is obviously the simplest, and could be calculated easily.
\item Generate $1000$ different standard normal distribution random initial vectors (with length $\bm{c}$), and select the best $10$ initial vectors by penalized mean squared error, when the bandwidth is equal to the mean of bandwidth sequence. For each bandwidth $h$, select the initial that minimizes the penalized MSE. That means that in the cross-validation step, we select different initial for different bandwidth $h$. The selected initial is determined by the bandwidth and corresponding penalized MSE.
\end{enumerate}

\subsection{Cross-validation}
We need to select the bandwidth $h_n$ in the kernel density estimation of $g$ and $g''$. We examine two cross-validation methods:
\begin{enumerate}
\item $10$-fold cross-validation. We partition the dataset into $10$ subsamples. Each time, we use $9$ subsamples as the training set and the remaining subsample as the validation set. We will observe that the $10$-fold cross validation method produces similar results with the GCV method introduced below.
\item Fix $j \in \left\{1,\cdots,n\right\}$, define
\begin{align}
\hat{g}\left(\int X_j\beta\right) = \left(\mathbb{X}_{\beta,j}^{\top}\mathbb{K}_{\beta,j}\mathbb{X}_{\beta,j}\right)^{-1}_{1\cdot}\left(\mathbb{X}_{\beta,j}^{\top}\mathbb{K}_{\beta,j}\right)\bm{Y} \doteq S_0\left(\beta;j\right)\bm{Y},\nonumber
\end{align}
where $S_0\left(\beta;j\right)$ is a $n$-dimensional row vector. Denote a $\left(n \times n\right)$-dimensional smoother matrix $\mathbb{S}_{h_n} = \left(S_0\left(\hat{\beta};1\right),\cdots, S_0\left(\hat{\beta};n\right)\right)^{\top}$. We can get $\hat{\bm{Y}} = \mathbb{S}_{h_n}\bm{Y}$, where $\hat{\bm{Y}} \doteq \left(\hat{g}\left(\int X_1\hat{\beta}\right),\cdots,\hat{g}\left(\int X_n\hat{\beta}\right)\right)^{\top}$. The generalized cross-validation criterion is 
\begin{align}
\text{GCV}\left(h_n\right) \doteq \frac{\frac{1}{n}\left\|\left(\mathbb{I}-\mathbb{S}_{h_n}\right)\bm{Y}\right\|_2^2}
{\left[\frac{1}{n}\text{tr}\left(\mathbb{I}-\mathbb{S}_{h_n}\right)\right]^2},\nonumber
\end{align}
where $\mathbb{I}$ is a $n$-dimensional identity matrix. Find a bandwidth $h_n$ minimizing the $\text{GCV}\left(h_n\right)$. Note that this does not account for estimating $\beta^0$.
\end{enumerate}

\subsection{Simulation Study}
In order to obtain simulated functional data, we defined a $25$-dimensional Fourier basis $\bm{\psi}\left(t\right)$, where $t \in \left[0,1\right]$. The covariate function $X\left(t\right)$ is defined in $\left[0,1\right]$, such that
\begin{align}
X\left(t\right) = \sum\limits_{i=1}^{25} \eta_i\psi_i\left(t\right),\nonumber
\end{align}
where $\eta_i = \frac{i-1}{24}\cdot \mathrm{N}\left(0,1\right)$. The coefficient function is 
\begin{align}
\beta^0\left(t\right) = \bm{a}^{\top}\bm{\psi}\left(t\right),\nonumber
\end{align}
where $\bm{a} = \left(0,1,1,0.5,0,\cdots,0\right)^{\top}$. We use three different link functions:
\begin{enumerate}
\item $g\left(s\right) = e^{-s}$.
\item $g\left(s\right) = -s^2$.
\item $g\left(s\right) = s$.
\end{enumerate}
In order to measure the performance of our estimators, we define the MSE of the estimated $\beta^0$ and $g^{\left(k\right)}$ to be
\begin{align}
\text{RSE} = \left[\int_0^1 \left(\hat{\beta}\left(t\right) - \beta^0\left(t\right)\right)^2\mathrm{d}t\right]^{\frac{1}{2}},\nonumber
\end{align}
and
\begin{align}
\text{RASE(k)} = \left\{\frac{1}{n}\sum\limits_{i=1}^n\left[\hat{Y}^{\left(k\right)}_i - g^{\left(k\right)}\left(\int_0^1 X_i\left(t\right)\beta^0\left(t\right)\mathrm{d}t\right)\right]^2\right\}^{\frac{1}{2}},\nonumber
\end{align}
where $\hat{Y}^{\left(k\right)}_i = \hat{g}^{\left(k\right)}\left(\int_0^1 X_i\left(t\right)\hat{\beta}\left(t\right)\mathrm{d}t\right)$.\\
Theorem $6$ shows that for consistency, the bandwidth must scale between $O\left(n^{-\frac{1}{6}}\right)$ and $O\left(n^{-\frac{1}{8}}\right)$. Assume that the optimal bandwidth for the curvature $g''$ is $n^{-\frac{1}{7}}$, by \cite{chen2011single}, we can expect that the optimal bandwidth for the link function $g$ is $n^{-\frac{1}{5}}$. We use either the GCV or the $10$-fold cross-validation to select a bandwidth h, and rescale it to be $h^{\frac{5}{7}}$. As we have discussed before, we constrain the coefficient $\left\|\beta^0\right\|_2 = 1$; this was achieved by rescaling the bandwidth in our objective function. After solving the nonlinear optimization problem, we rescale both $\bm{c}$ and the bandwidth. The following table shows the simulation results of the GCV method. The number of data points we use are $n = 100$ and $n = 1000$. In Table $2$, we also show the RASE$2$ results without rescaling the bandwidth $h$ to be $h\left\|\bm{c}\right\|$ and $h^{\frac{5}{7}}$, if we start from random initials. We can conclude that re-scaling does matter to the final results and it reduces the error of RASE$2$. The $10$-fold cross-validation results in the Appendix C also confirm that by re-scaling the bandwidth, we improve our estimate for $g''$.
\begin{table}[H]
\caption{Simulation results by GCV using rescaled bandwidth $h\left\|\bm{c}\right\|$ and constraint $\hat{\beta}$.} \label{table8}
\begin{center}
\begin{tabular}{ccccc|ccc|cccc}
\hline
\multirow{2}{*}{}&\multirow{2}{*}{} & \multicolumn{3}{c}{\textbf{g1}} &  \multicolumn{3}{c}{\textbf{g2}} & \multicolumn{3}{c}{\textbf{g3}} \\
\cline{3-11}
\textbf{Initial} & \textbf{n} & \textbf{RSE} & \textbf{RASE} & \textbf{RASE2} & \textbf{RSE} & \textbf{RASE} & \textbf{RASE2} & \textbf{RSE} & \textbf{RASE} & \textbf{RASE2} & \\
\hline
\multirow{2}{*}{True} & 100 & 0.2887 & 0.0959 & 9.4156 & 0.2785 & 0.0945 & 7.2648 & 0.3442 & 0.0918 & 5.0951\\
 & 1000& 0.0829 & 0.0336 & 0.9003 & 0.0731 & 0.0334 & 0.5429 & 0.1095 & 0.0276 & 0.8620 \\
\hline
\multirow{2}{*}{Linear} & 100 & 1.1401 & 0.3401 &1.3869 & 1.3569 & 0.4166 & 1.2014 & 0.7049 & 0.1393 & 0.2507\\
& 1000 & 1.9010 & 0.2340 & 0.5205 & 1.0719 & 0.3421 & 1.3075 & 0.5287 & 0.0891 & 0.0729 \\
\hline
\multirow{2}{*}{Equal} & 100 & 0.8389 & 0.2820 & 1.0933 & 0.8342 & 0.2566 & 1.4349 & 0.8329 & 0.1857 & 0.2589 \\
& 1000 & 0.8502 & 0.3159 & 1.0133 & 0.8437 & 0.2838 & 1.4043 & 0.8508 & 0.1921 & 0.0905 \\
\hline
\multirow{2}{*}{Random} & 100 & 1.1911 & 0.1461 & 0.5597 & 1.1617 & 0.1979 & 0.6186 & 1.2603 & 0.0942 & 0.2043\\
& 1000 & 1.2704 & 0.1320 & 0.4354 & 1.3021 & 0.1890 & 0.4413 & 1.2300 & 0.0796 & 0.0637 
\\[1ex] \hline
\end{tabular}
\end{center}
\end{table}
\noindent We observe that RSE and RASE, as expected, achieve best performance when we initialize our optimizer at the true values. However, the more natural Linear initialization strategy does not outperform initializing at Equal coefficients. For both RASE and RASE$2$, a more intensive search over initializations pays off; in the case of RASE$2$ this even outperforms starting from true values. We suspect that this is associated with differing optimal smoothness criteria. Table $2$ compares our results to when we do not use the $h^{\frac{5}{7}}$ re-scaling where applying this has a significant effect;  by starting from a position far from the optimum, a large bandwidth may have the effect of smoothing the objective function, indirectly improving our re-estimate of $g"$.  \\
Overall, the random initial produces a much better results compared to other initial strategies. Starting from $1000$ initial vectors, we have a chance to select the best $10$ initial vectors, which increases the probability of selecting a ``good" starting points and decreases the chance of converging to a local minimum for the non-linear optimization problem, although this comes at a significant computational prices. In addition, we do observe an improvement of RASE and RASE$2$ when $n = 1000$ compared to $n = 100$.
\begin {table}[H]
\caption {Simulation results of random starting value with rescaled and original bandwidths selected by GCV} \label{table5} 
\begin{center}
\begin{tabular}{ccc|cc|ccc}
\hline
\multirow{2}{*}{} & \multicolumn{2}{c}{\textbf{g1}} &  \multicolumn{2}{c}{\textbf{g2}} & \multicolumn{2}{c}{\textbf{g3}} \\
\cline{2-7}
\textbf{n} & \textbf{original} & \textbf{rescaled}  & \textbf{original} & \textbf{rescaled}  &  \textbf{original} & \textbf{rescaled} \\
\hline
\multirow{2}{*}{}  100 & 1.2793  &0.5597 & 2.1497 & 0.6186  & 0.2360 &  0.2043\\ 
 1000 & 1.1297 & 0.4354   & 2.2081 & 0.4413 & 0.0703 & 0.0637
\\[1ex] \hline
\end{tabular}
\end{center}
\end{table}
\noindent In order to provide a visual sense of the performance of our estimate, in Figures $1$ and $2$, we plot the estimates of the link function $g\left(s\right) = e^{-s}$ and $g\left(s\right) = s$, respectively. We observe that the estimate and the true curve of the link function $g$ almost overlap with each other, but the second derivative has significantly larger error relative to the truth. 
\begin{figure}
\begin{center}
  \includegraphics[width=15cm]{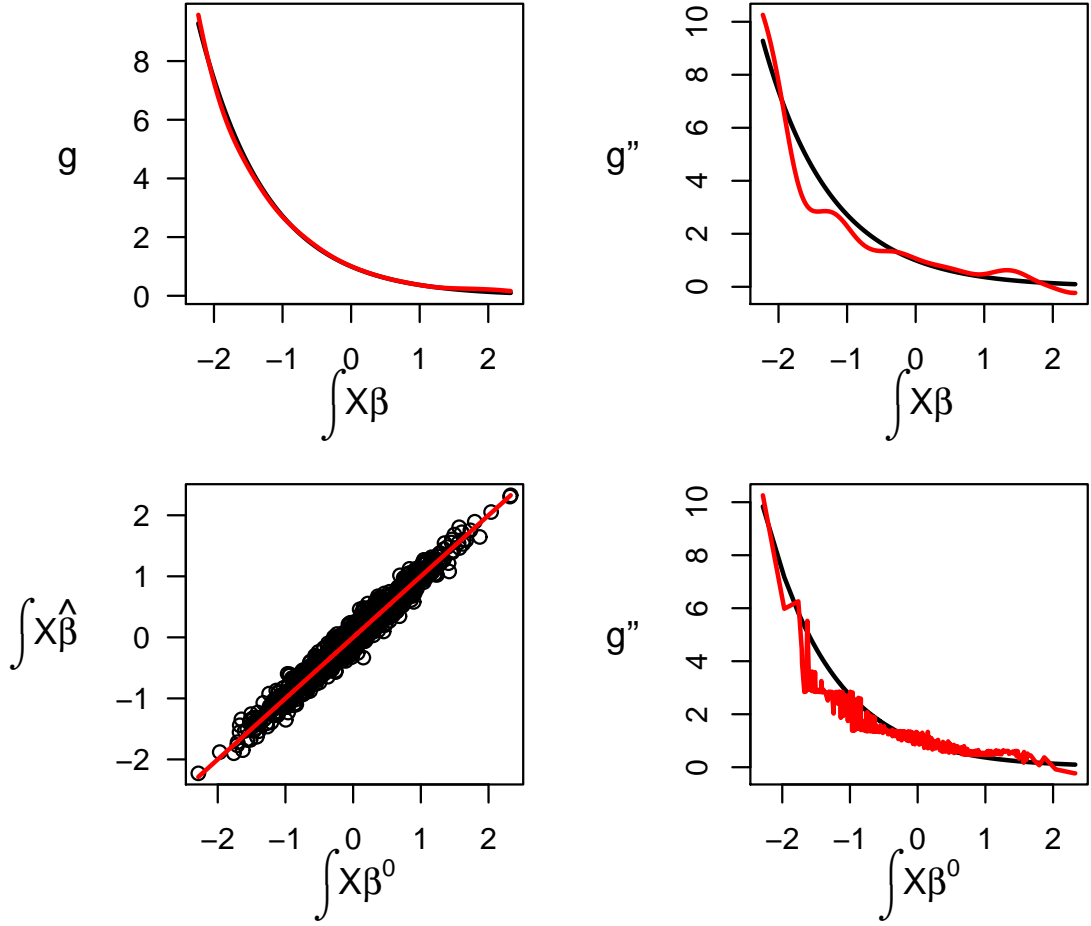}\\
  \caption{Example results using $g\left(s\right) = e^{-s}$. The top-left and right panel are the plots of $g$ and $g''$ over $1000$ equally-spaced grid points, while the lower and upper bound are the minimum and maximum of $\int_0^1 X\left(t\right)\hat{\beta}\left(t\right)\mathrm{d}t$. The bottom-right panel gives $g''\left(\int_0^1 X\left(t\right)\hat{\beta}\left(t\right)\mathrm{d}t\right)$ plotted against the true $\int_0^1 X\left(t\right)\beta^0\left(t\right)\mathrm{d}t$, providing a visual representation of the error we control in Theorem $6$. In all plots, the true curve is given by the black line, while the red line is the estimated curve. The bottom-left panel is the plot of $\int_0^1 X\left(t\right)\hat{\beta}\left(t\right)\mathrm{d}t$ versus $\int_0^1 X\left(t\right)\beta^0\left(t\right)\mathrm{d}t$, while the red line is $y = x$.}\label{}
 \end{center}
\end{figure}
\begin{figure}
\begin{center}
  \includegraphics[width=15cm]{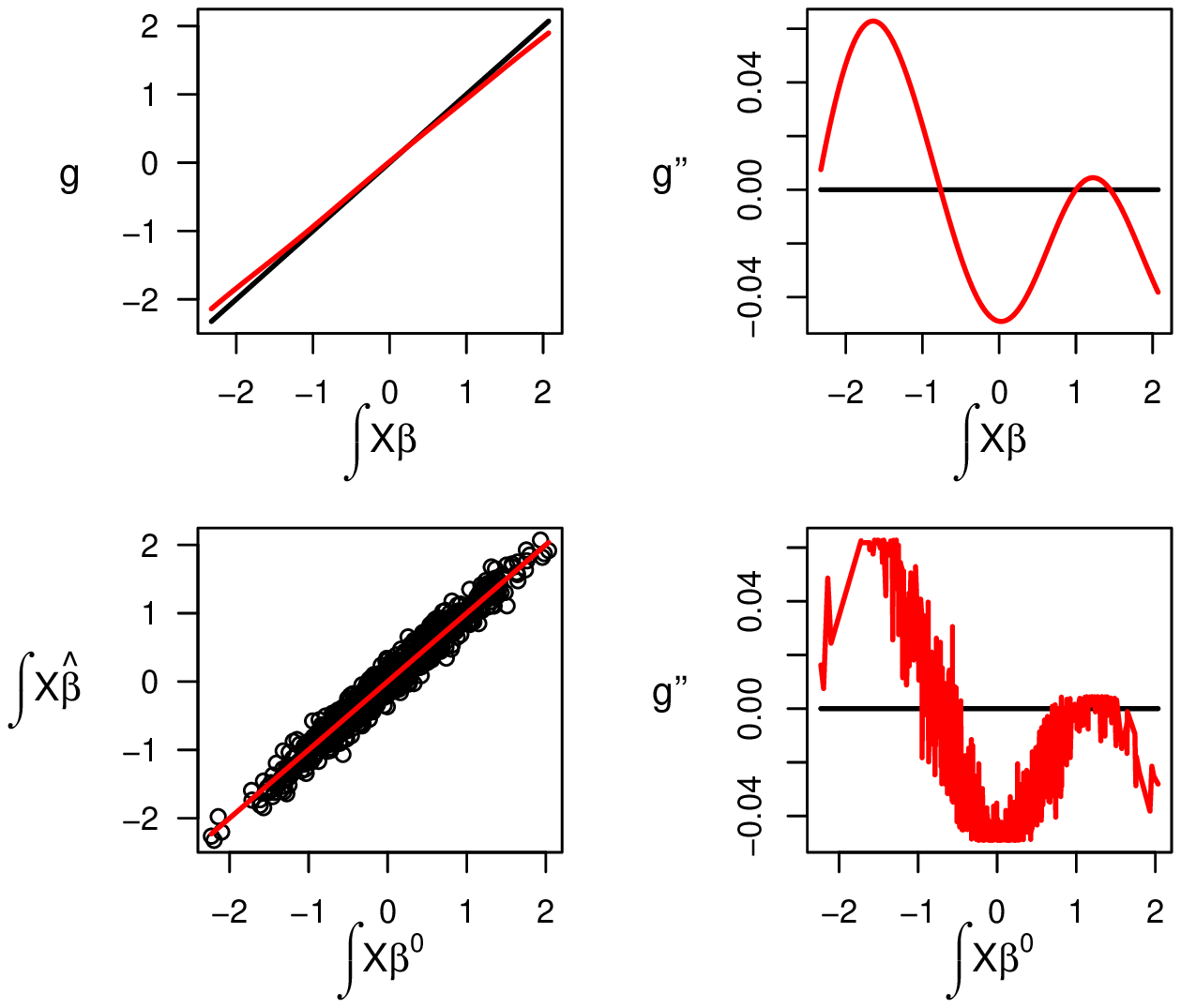}\\
  \caption{Example results using $g\left(s\right) = s$. The top-left and right panel are the plots of $g$ and $g''$ over $1000$ equally-spaced grid points, while the lower and upper bound are the minimum and maximum of $\int_0^1 X\left(t\right)\hat{\beta}\left(t\right)\mathrm{d}t$. The bottom-right panel gives $g''\left(\int_0^1 X\left(t\right)\hat{\beta}\left(t\right)\mathrm{d}t\right)$ plotted against the true $\int_0^1 X\left(t\right)\beta^0\left(t\right)\mathrm{d}t$, providing a visual representation of the error we control in Theorem $6$. In all plots, the true curve is given by the black line,, while the red line is the estimated curve. The bottom-left panel is the plot of $\int_0^1 X\left(t\right)\hat{\beta}\left(t\right)\mathrm{d}t$ versus $\int_0^1 X\left(t\right)\beta^0\left(t\right)\mathrm{d}t$, while the red line is $y = x$.}\label{}
 \end{center}
\end{figure}

\section{Ecological Data}
\subsection{Model Formulation}
Examining our ecological questions, the purpose of estimating the second derivative of the link function in a functional single index model is to figure out whether the link function $g$ is convex or concave. Then, we can answer the question: in which environment, constant or varying, the plant will grow better. We apply our nested estimation method to plant growth dataset. In this dataset, there are several variables:
\begin{enumerate}
\item $\tt{logarea.t1, logarea.t0:}$ the plant's logarithm of area at time $t0$ and $t1$, where $t0$ is the observation start time and $t1$ is the end time. A relatively large quantity indicates a high growth rate of the plant at that time.
\item $\tt{W}$: a measure of plant competition. Taken to be a scalar covariate.
\item $\tt{p.00 - p.36}$: discrete aggregated temporal record of precipitation, denoted as $p\left(s\right)$.
\item $\tt{t.00 - t.36}$: discrete aggregated temporal record of temperature, denoted as $t\left(s\right)$.
\end{enumerate}
The precipitation and temperature histories are modeled as two covariate functions. Assume that the response variable is $logarea.t1 - logarea.t0$, a Functional Single Index model is:
\begin{align}
logarea.t1 - logarea.t0 = g\left(\alpha\cdot W + \int p\beta_1 + \int t\beta_2\right),\nonumber
\end{align}
where the coefficient $\alpha$, the functions $g$, $\beta_1$ and $\beta_2$ need to be estimated.

\subsection{Results}
We used three different starting values: linear, equal and random. For each starting point, we selected the curve with minimum GCV value. Then, we selected the estimate with the minimum GCV value among all starting points. The random starting point is selected. The plot of the estimated $g$, $g''$, and the coefficient functions $\beta_1$ and $\beta_2$ is in the Figure $3$. Since the estimated $g''$ is always negative, the link function $g$ is concave. We could conclude that the species will grow better with a constant environment.
\begin{figure}
\begin{center}
  \includegraphics[width=15cm]{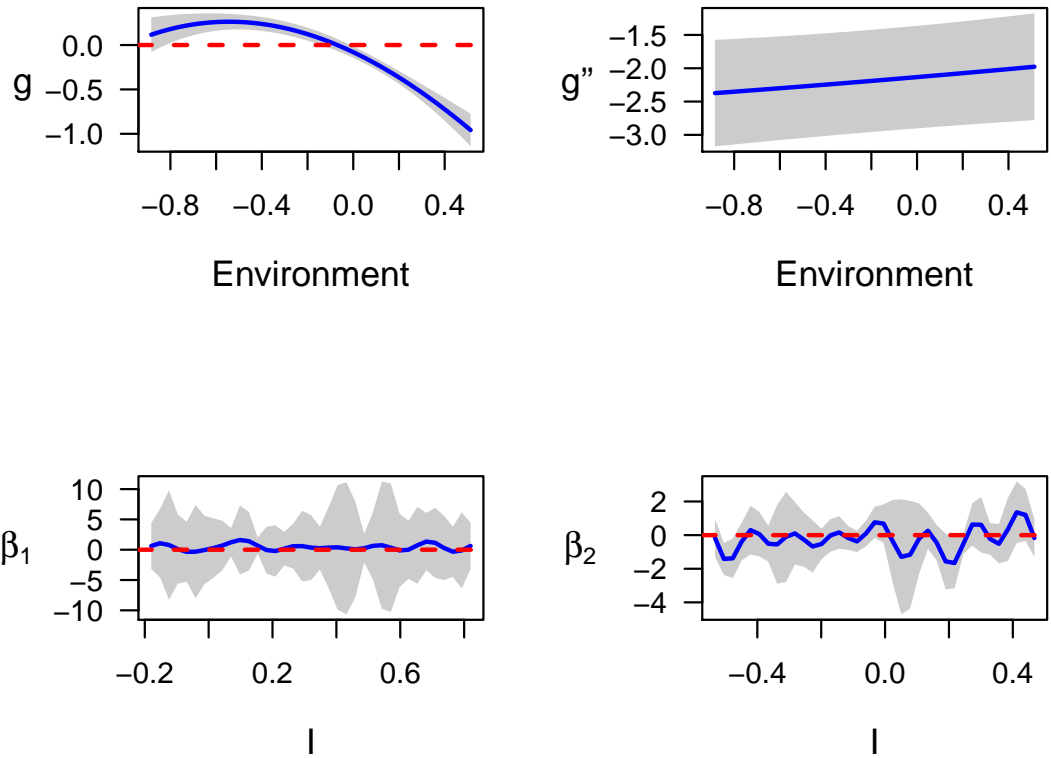}\\
  \caption{The top-left panel is the estimate of $g$, while the top-right panel is the estimate of $g''$. The bottom panel are the plots of the estimated $\beta_1$ and $\beta_2$.}\label{}
 \end{center}
\end{figure}

\section{Conclusion}
To answer the ecological question, we need to figure out the convexity or concavity of the link function $g$, or equivalently, find whether the second derivative is positive or negative. In this paper, we used the local quadratic method to approximate the link function $g$, and estimated the curvature of $g$ and the coefficient function $\beta^0$ by a nested optimization procedure. Under some assumptions, we showed that the coefficient function $\beta^0$ could be estimated root-$n$ consistently. In addition, the rate of convergence of the curvature $g''$ is $\frac{1}{n}\sum\limits_{i=1}^n\mathrm{E}\left[\hat{g}''\left(\int X_i\hat{\beta}\right) - g''\left(\int X_i\beta^0\right)\right]^2 = O\left(h_n^4 + \frac{1}{nh_n^4}\right)$.\\
In the simulation study, we used three different link functions, convex, concave and neither convex nor concave. While we derive convergence rates for the curvature of $g$,  our simulation results demonstrate the numerical challenges that accompany Functional Single Index models.  We can estimate $g$ fairly well, but our estimates of $g''$ are sensitive to the choice of initial condition, requiring considerable care in optimization. We expect that these numerical challenges are specific to the estimators employed, but they suggest that alternative means for the influence of environmental variability on plant growth are warranted.

\clearpage
\appendix
\section{Bernstein's Inequality}
\textbf{Bernstein's Inequality.} Let $Y_{1n}, \cdots, Y_{nn}$ be independent random variables with means $0$ and bounded ranges, that is $\left|Y_{in}\right| \leq c_n$. Write $\sigma_{in}^2$ for the variance of $Y_{in}$. Suppose $V_n \geq \sigma_{1n}^2 + \cdots + \sigma_{nn}^2$. Then for each $\eta_n > 0$,
\begin{align}
\mathrm{P}\left(\left|Y_{1n} + \cdots + Y_{nn}\right| > \eta_n\right) \leq \exp\left[-\frac{\eta_n^2}{2\left(V_n + \frac{1}{3}c_n\eta_n\right)}\right].\nonumber
\end{align}

\section{Lemmas for Theorem $6$}
\begin{lemma}
7 If $nh_n^6 \rightarrow \infty$, $nh_n^8 \rightarrow 0$ and $\frac{nh_n^{3+\frac{3}{m-1}}}{-\log h_n} \rightarrow \infty$, then we have
\begin{align}
\mathrm{E}\left[\hat{g}''\left(\int X_j\hat{\beta}\right) - \bar{g}''\left(\int X_j\hat{\beta}\right)\right]^2  = O\left(\frac{1}{nh_n^4}\right).\nonumber
\end{align}
\end{lemma}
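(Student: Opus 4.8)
The plan is to notice that $\hat{g}''$ and $\bar{g}''$ apply the \emph{same} local‑quadratic weights $S_2\left(\hat{\beta};j\right)$ and differ only in the vector they act on: $\hat{g}''$ acts on $\bm{Y}$, while $\bar{g}''$ acts on $\bm{g} = \left(g\left(\int X_1\beta^0\right),\dots,g\left(\int X_n\beta^0\right)\right)^{\top}$. Since $Y_i = g\left(\int X_i\beta^0\right)+\epsilon_i$, we have $\bm{Y}-\bm{g} = \bm{\epsilon}$, so by $\left(\ref{s2}\right)$,
\[
\hat{g}''\left(\int X_j\hat{\beta}\right) - \bar{g}''\left(\int X_j\hat{\beta}\right) = S_2\left(\hat{\beta};j\right)\left(\bm{Y}-\bm{g}\right) = S_2\left(\hat{\beta};j\right)\bm{\epsilon}.
\]
Hence the lemma is a bound on the noise propagation of the second‑derivative smoother weights, and the target reduces to $\mathrm{E}\left[\left(S_2\left(\hat{\beta};j\right)\bm{\epsilon}\right)^2\right] = O\left(1/(nh_n^4)\right)$.

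If $\beta^0$ were known this would be immediate: conditioning on $\mathcal{X} = \sigma\left(X_1,\dots,X_n\right)$, Assumption 1 gives $\mathrm{E}\left[\left(S_2\left(\beta^0;j\right)\bm{\epsilon}\right)^2\mid\mathcal{X}\right] = \sigma^2\left\|S_2\left(\beta^0;j\right)\right\|_2^2$. The first difficulty is that $\hat{\beta}$ depends on $\bm{Y}$, hence on $\bm{\epsilon}$, so $S_2\left(\hat{\beta};j\right)$ is not $\mathcal{X}$‑measurable. I would remove this dependence by passing to a uniform bound, $\left(S_2\left(\hat{\beta};j\right)\bm{\epsilon}\right)^2 \le \sup_{\beta\in\mathcal{B}}\left(S_2\left(\beta;j\right)\bm{\epsilon}\right)^2$, and then controlling the supremum by a Bernstein/concentration argument of exactly the type used for Lemmas $1$--$3$, combined with a deterministic, uniform‑in‑$\beta$ bound on $\left\|S_2\left(\beta;j\right)\right\|_2^2$ given $\mathcal{X}$. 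Assumption 6, $\sup_{\beta,x}f\left(x\mid\beta\right)<\infty$, together with the truncation in Assumption 5 ($p_n = o(1/h_n)$ and the tail bound $\left(\ref{dim}\right)$), is what makes the $T_j^p$ expansions of Section 4 hold uniformly over $\beta$; an alternative, if one prefers not to take a supremum, is to use the root‑$n$ consistency of $\int X_j\hat{\beta}$ from Theorem 5 to confine $\hat{\beta}$ to a shrinking neighbourhood of $\beta^0$ on which $S_2\left(\cdot;j\right)$ is essentially constant.

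The core estimate is the uniform bound on $\left\|S_2\left(\beta;j\right)\right\|_2^2$. Starting from the explicit form
\[
S_2\left(\beta;j\right)_i = \frac{2\,K\!\left(\tfrac{\int X_i\beta - \int X_j\beta}{h_n}\right)\left[T_j^0\left(\beta\right)\left(\int X_i\beta-\int X_j\beta\right)^2 - T_j^2\left(\beta\right)\right]}{T_j^0\left(\beta\right)T_j^4\left(\beta\right) - \left(T_j^2\left(\beta\right)\right)^2},
\]
I would insert the Section 4 asymptotics $T_j^p\left(\beta\right) = nh_n^{p+1}f\left(\int X_j\beta\mid\beta\right)\mu_p\left(K\right)+O(1)$ (recalling $T_j^p = 0$ for odd $p$), note that the denominator is of exact order $n^2h_n^6$ since $\mu_4\left(K\right)-\mu_2\left(K\right)^2>0$ by Cauchy--Schwarz, and bound the numerator term by term using that the compact support of $K$ makes only $O(nh_n)$ of the summands nonzero and forces $\left|\int X_i\beta-\int X_j\beta\right|\le h_n$ on those. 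Summing $S_2\left(\beta;j\right)_i^2$ over $i$, dividing by the squared denominator, and using $\mathrm{E}\left[\epsilon_i\epsilon_k\mid\mathcal{X}\right] = \sigma^2 I\left(i=k\right)$ then yields the claimed $O\left(1/(nh_n^4)\right)$ uniformly in $\beta$; this accounting, together with the concentration step, is the one routine‑but‑lengthy part I would carry out in full.

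The main obstacle is not the order‑of‑magnitude bookkeeping but making everything \emph{uniform} over the infinite‑dimensional set $\mathcal{B}$: uniform validity of the $T_j^p$ expansions, uniform control of $\left\|S_2\left(\beta;j\right)\right\|_2^2$, and control of $\sup_{\beta}\left(S_2\left(\beta;j\right)\bm{\epsilon}\right)^2$ rather than of its value at a fixed $\beta$. The bandwidth conditions $nh_n^6\to\infty$, $nh_n^8\to 0$ and $\frac{nh_n^{3+3/(m-1)}}{-\log h_n}\to\infty$ are exactly what the concentration step needs (they are the $\hat{g}''$‑analogues of the constraints behind Lemma $3$ and Theorem 4) and what makes the $O(1)$ remainders in the $T_j^p$ expansions negligible next to the leading $nh_n^{p+1}$ terms after the normalisations involved.
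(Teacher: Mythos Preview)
Your proposal and the paper's proof share the same starting observation and the same order-of-magnitude computation: both write $\hat{g}''-\bar{g}'' = S_2(\hat{\beta};j)\bm{\epsilon}$, use the explicit form of $S_2$ to split the expression into the two pieces coming from its numerator, and then feed in the asymptotics $T_j^p(\beta)\sim nh_n^{p+1}f(\int X_j\beta\mid\beta)\mu_p(K)$ from Section~4 (with denominator $T_j^0T_j^4-(T_j^2)^2\sim n^2h_n^6$) to reach $O(1/(nh_n^4))$. The difference lies entirely in how the random $\hat{\beta}$ is handled. The paper proceeds purely formally: it treats $\hat{\beta}$ as if it were fixed, hence $S_2(\hat{\beta};j)$ as if independent of $\bm{\epsilon}$, replaces the kernel sums by integrals against $f(\cdot\mid\hat{\beta})$, and reads off the order directly---it never addresses the measurability concern you raise. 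Your plan to pass to $\sup_{\beta\in\mathcal{B}}$ and control that supremum by a Bernstein-type argument in the spirit of Lemmas~1--3 is a genuinely more careful treatment of exactly this point; it buys a clean justification at the price of a noticeably longer proof and the need for uniform-in-$\beta$ versions of the $T_j^p$ expansions. If you are content with the level of formality used elsewhere in the paper (Lemmas~8--9 plug in $\hat{\beta}$ in the same way), you can shortcut to the paper's direct calculation; otherwise your uniform route is the honest completion of the same argument.
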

\begin{proof}
Denote $\bm{\epsilon} = \left(\epsilon_1,\cdots,\epsilon_n\right)^{\top}$. The term could be expressed as
\begin{align}
&\mathrm{E}\left[\hat{g}''\left(\int X_j\hat{\beta}\right) - \bar{g}''\left(\int X_j\hat{\beta}\right)\right]^2 \nonumber \\
& \hspace{.4cm} = \mathrm{E}\left[S_2\left(\hat{\beta};j\right)\bm{\epsilon}\right]^2 \nonumber \\
& \hspace{.4cm} = \mathrm{E}\left[\frac{2T_j^0\left(\hat{\beta}\right)\sum\limits_{i=1}^n\left(\int X_i\hat{\beta}-\int X_j\hat{\beta}\right)^2K\left(\frac{\int X_i\hat{\beta}-\int X_j\hat{\beta}}{h_n}\right)\epsilon_i-2T_j^2\left(\hat{\beta}\right)\sum\limits_{i=1}^nK\left(\frac{\int X_i\hat{\beta}-\int X_j\hat{\beta}}{h_n}\right)\epsilon_i}{T_j^0\left(\hat{\beta}\right)T_j^4\left(\hat{\beta}\right) - \left(T_j^2\left(\hat{\beta}\right)\right)^2}\right]^2 \nonumber \\
& \hspace{.4cm} \leq \mathrm{E}\left[\frac{2T_j^0\left(\hat{\beta}\right)\sum\limits_{i=1}^n\left(\int X_i\hat{\beta}-\int X_j\hat{\beta}\right)^2K\left(\frac{\int X_i\hat{\beta}-\int X_j\hat{\beta}}{h_n}\right)\epsilon_i}{T_j^0\left(\hat{\beta}\right)T_j^4\left(\hat{\beta}\right) - \left(T_j^2\left(\hat{\beta}\right)\right)^2}\right]^2 + \mathrm{E}\left[\frac{2T_j^2\left(\hat{\beta}\right)\sum\limits_{i=1}^nK\left(\frac{\int X_i\hat{\beta}-\int X_j\hat{\beta}}{h_n}\right)\epsilon_i}{T_j^0\left(\hat{\beta}\right)T_j^4\left(\hat{\beta}\right) - \left(T_j^2\left(\hat{\beta}\right)\right)^2}\right]^2. \nonumber 
\end{align}
We bound the next two terms as:
\begin{align}
& \mathrm{E}\left[\frac{T_j^0\left(\hat{\beta}\right)\sum\limits_{i=1}^n\left(\int X_i\hat{\beta}-\int X_j\hat{\beta}\right)^2K\left(\frac{\int X_i\hat{\beta}-\int X_j\hat{\beta}}{h_n}\right)\epsilon_i}{T_j^0\left(\hat{\beta}\right)T_j^4\left(\hat{\beta}\right) - \left(T_j^2\left(\hat{\beta}\right)\right)^2}\right]^2 \nonumber \\
& \hspace{.4cm} = \mathrm{E}\left[\frac{1}{h_n^5}\cdot\frac{\frac{1}{nh_nf\left(u\left|\hat{\beta}\right.\right)}T_j^0\left(\hat{\beta}\right)\cdot\frac{1}{n}\sum\limits_{i=1}^n\left(\int X_i\hat{\beta}-\int X_j\hat{\beta}\right)^2K\left(\frac{\int X_i\hat{\beta}-\int X_j\hat{\beta}}{h_n}\right)\epsilon_i}{\frac{1}{nh_nf\left(u\left|\hat{\beta}\right.\right)}T_j^0\left(\hat{\beta}\right)\cdot\frac{1}{nh_n^5f\left(u\left|\hat{\beta}\right.\right)}T_j^4\left(\hat{\beta}\right) - \left(\frac{1}{nh_n^3f\left(u\left|\hat{\beta}\right.\right)}T_j^2\left(\hat{\beta}\right)\right)^2}\right]^2 \nonumber \\
& \hspace{.4cm} = \left(\frac{1}{h_n^5}\cdot\frac{1}{\mu_4\left(K\right)-\left(\mu_2\left(K\right)\right)^2}\cdot \frac{1}{f\left(u\left|\hat{\beta}\right.\right)}\right)^2 \cdot \frac{1}{n} \left[\int \left(z-u\right)^2K\left(\frac{z-u}{h_n}\right)f\left(z\left|\hat{\beta}\right.\right)\mathrm{d}z\right]^2 \cdot \mathrm{E}\left(\epsilon^2\right) \nonumber \\ 
& \hspace{.4cm} \doteq M_1 \cdot \frac{1}{nh_n^{10}f^2\left(u\left|\hat{\beta}\right.\right)}\left[\int \left(h_nm\right)^2K\left(m\right)f\left(u+h_nm\left|\hat{\beta}\right.\right)h_n\mathrm{d}m\right]^2 \cdot \mathrm{E}\left(\epsilon^2\right) \nonumber \\
& \hspace{.4cm} \sim  M_1 \cdot \frac{1}{nh_n^{10}f^2\left(u\left|\hat{\beta}\right.\right)}\left[h_n^3f\left(u\left|\hat{\beta}\right.\right) \int m^2K\left(m\right)\mathrm{d}m\right]^2 \cdot \mathrm{E}\left(\epsilon^2\right) \nonumber \\
& \hspace{.4cm} \sim \frac{1}{nh_n^4}, \nonumber
\end{align}
and
\begin{align}
& \mathrm{E}\left[\frac{T_j^2\left(\hat{\beta}\right)\sum\limits_{i=1}^nK\left(\frac{\int X_i\hat{\beta}-\int X_j\hat{\beta}}{h_n}\right)\epsilon_i}{T_j^0\left(\hat{\beta}\right)T_j^4\left(\hat{\beta}\right) - \left(T_j^2\left(\hat{\beta}\right)\right)^2}\right]^2 \nonumber\\
& \hspace{.4cm} = \mathrm{E}\left[\frac{1}{h_n^3}\cdot\frac{\frac{1}{nh_n^3\left(u\right)}T_j^2\left(\hat{\beta}\right)\cdot \frac{1}{n}\sum\limits_{i=1}^nK\left(\frac{\int X_i\hat{\beta}-\int X_j\hat{\beta}}{h_n}\right)\epsilon_i}{\frac{1}{nh_nf\left(u\left|\hat{\beta}\right.\right)}T_j^0\left(\hat{\beta}\right)\cdot\frac{1}{nh_n^5f\left(u\left|\hat{\beta}\right.\right)}T_j^4\left(\hat{\beta}\right) - \left(\frac{1}{nh_n^3f\left(u\right)}T_j^2\left(\hat{\beta}\right)\right)^2}\right]^2 \nonumber \\
& \hspace{.4cm} = \left(\frac{1}{h_n^3}\cdot\frac{\mu_2\left(K\right)}{\mu_4\left(K\right)-\left(\mu_2\left(K\right)\right)^2}\cdot \frac{1}{f\left(u\left|\hat{\beta}\right.\right)}\right)^2 \cdot \frac{1}{n} \left[\int K\left(\frac{z-u}{h_n}\right)f\left(z\left|\hat{\beta}\right.\right)\mathrm{d}z\right]^2 \cdot \mathrm{E}\left(\epsilon^2\right) \nonumber \\ 
& \hspace{.4cm} \doteq  M_2 \cdot \frac{1}{nh_n^{6}f^2\left(u\left|\hat{\beta}\right.\right)}\left[\int K\left(m\right)f\left(u+h_nm\left|\hat{\beta}\right.\right)h_n\mathrm{d}m\right]^2 \cdot \mathrm{E}\left(\epsilon^2\right) \nonumber \\
& \hspace{.4cm} \sim M_2 \cdot \frac{1}{nh_n^{6}f^2\left(u\left|\hat{\beta}\right.\right)}\left[h_nf\left(u\left|\hat{\beta}\right.\right) \int K\left(m\right)\mathrm{d}m\right]^2 \cdot \mathrm{E}\left(\epsilon^2\right) \nonumber \\
& \hspace{.4cm} \sim \frac{1}{nh_n^4}. \nonumber
\end{align}
where $M_1$, $M_2$ are constants and $u = \int X_j\hat{\beta}$.\\
Therefore, we have
\begin{align}
\mathrm{E}\left[\hat{g}''\left(\int X_j\hat{\beta}\right) - \bar{g}''\left(\int X_j\hat{\beta}\right)\right]^2 = O\left(\frac{1}{nh_n^4}\right).\nonumber
\end{align}
\end{proof}
\begin{lemma}
8 If $nh_n^6 \rightarrow \infty$, $nh_n^8 \rightarrow 0$ and $\frac{nh_n^{3+\frac{3}{m-1}}}{-\log h_n} \rightarrow \infty$, then we have
\begin{align}
\mathrm{E}\left[\bar{g}''\left(\int X_j\hat{\beta}\right) - g''\left(\int X_j\hat{\beta}\right)\right]^2 = O\left(h_n^4 + \frac{1}{nh_n^4}\right).\nonumber
\end{align}
\end{lemma}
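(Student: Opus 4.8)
The plan is to split the difference into a deterministic local-polynomial bias part and a part driven entirely by the estimation error $\hat{\beta}-\beta^0$, which enters only through the kernel weights and the design matrix. Write $\hat{v}_i = \int X_i\hat{\beta}$, $v_i = \int X_i\beta^0$, and let $\hat{\bm{g}} = \left(g(\hat{v}_1),\dots,g(\hat{v}_n)\right)^{\top}$. Since $\bar{g}''\left(\int X_j\hat{\beta}\right) = S_2(\hat{\beta};j)\bm{g}$ with $\bm{g} = \left(g(v_1),\dots,g(v_n)\right)^{\top}$, decompose
\begin{align}
\bar{g}''\left(\int X_j\hat{\beta}\right) - g''\left(\int X_j\hat{\beta}\right) = \left[S_2(\hat{\beta};j)\hat{\bm{g}} - g''(\hat{v}_j)\right] + S_2(\hat{\beta};j)\left(\bm{g} - \hat{\bm{g}}\right) \doteq \mathrm{(I)} + \mathrm{(II)}. \nonumber
\end{align}

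For term $\mathrm{(I)}$, Taylor-expand each $g(\hat{v}_i)$ about $\hat{v}_j$ to second order with integral remainder $R_i = \int_{\hat{v}_j}^{\hat{v}_i}(\hat{v}_i-t)\left(g''(t)-g''(\hat{v}_j)\right)\mathrm{d}t$, so that $|R_i| = O\left(|\hat{v}_i-\hat{v}_j|^3\right)$ by Assumption $3$. The row vector $S_2(\hat{\beta};j)$ is the third row of the weighted-least-squares hat operator for a local quadratic, hence it annihilates constants and linear terms and reproduces the quadratic term exactly; using the vanishing of the odd moments $T_j^1(\hat{\beta}) = T_j^3(\hat{\beta}) = 0$ from the symmetry of $K$, this yields $\mathrm{(I)} = S_2(\hat{\beta};j)\bm{R}$ with $\bm{R} = (R_1,\dots,R_n)^{\top}$. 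Substituting the explicit formula for $S_2$ together with the expansions $T_j^p(\hat{\beta}) = nh_n^{p+1}f(\hat{v}_j\,|\,\hat{\beta})\mu_p(K) + O(1)$ derived earlier, the implied weight on observation $i$ is a constant multiple of $(m_i^2-\mu_2(K))K(m_i)/(nh_n^3 f(\hat{v}_j\,|\,\hat{\beta}))$ with $m_i = (\hat{v}_i-\hat{v}_j)/h_n$; since the leading $O(h_n^3)$ part of $R_i$ is odd in $m_i$, the symmetry of $K$ eliminates its contribution, and Assumptions $3$ and $6$ give the usual local-polynomial bias order $\mathrm{(I)} = O(h_n^2)$, hence $\mathrm{E}[\mathrm{(I)}^2] = O(h_n^4)$.

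For term $\mathrm{(II)}$, the Lipschitz bound in Assumption $3$ gives $|g(v_i)-g(\hat{v}_i)| \le D_2|v_i-\hat{v}_i|$, and by Theorem $5$, $\sqrt{n}\,(v_i-\hat{v}_i) = O_p(1)$ for every $i$, so $\max_i|v_i - \hat{v}_i| = O_p(n^{-1/2})$ on an event of probability tending to one. On that event, feeding a vector with sup-norm $O(n^{-1/2})$ through the explicit $S_2$ formula — its numerator sums are dominated by $n^{-1/2}T_j^2(\hat{\beta})$ and $n^{-1/2}T_j^0(\hat{\beta})$ respectively — together with the orders $T_j^0(\hat{\beta})\sim nh_n$, $T_j^2(\hat{\beta})\sim nh_n^3$, $T_j^4(\hat{\beta})\sim nh_n^5$ and $T_j^0(\hat{\beta})T_j^4(\hat{\beta})-\left(T_j^2(\hat{\beta})\right)^2 \sim n^2h_n^6$, yields $\left|S_2(\hat{\beta};j)(\bm{g}-\hat{\bm{g}})\right| = O_p\!\left(\frac{1}{\sqrt{n}\,h_n^2}\right)$, so this part contributes $O\!\left(\frac{1}{nh_n^4}\right)$ to the mean square.

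The main obstacle is promoting these in-probability statements to bounds on the unconditional expectation. The random quantity $T_j^0(\hat{\beta})T_j^4(\hat{\beta})-\left(T_j^2(\hat{\beta})\right)^2$ sits in the denominator of $S_2(\hat{\beta};j)$, so one must show that the exceptional event on which it is atypically small, or on which some $|v_i-\hat{v}_i|$ is large, contributes negligibly to $\mathrm{E}[\,\cdot\,]^2$. This is handled exactly as in the proof of Lemma $7$ and Lemmas $1$--$3$: truncate $Y$ (and use that $g$ itself is bounded by Assumption $3$), control the $T_j^p(\hat{\beta})$ uniformly in $\hat{\beta}$ via the associated concentration inequalities under $nh_n^6\to\infty$, $nh_n^8\to 0$ and $\frac{nh_n^{3+\frac{3}{m-1}}}{-\log h_n}\to\infty$, and bound the residual contribution of the exceptional set using the $m$-th absolute moment condition in Assumption $2$. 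Combining $\mathrm{E}[\mathrm{(I)}^2]$ and $\mathrm{E}[\mathrm{(II)}^2]$ via $(a+b)^2\le 2a^2+2b^2$ then gives $\mathrm{E}\left[\bar{g}''\left(\int X_j\hat{\beta}\right)-g''\left(\int X_j\hat{\beta}\right)\right]^2 = O\left(h_n^4 + \frac{1}{nh_n^4}\right)$.
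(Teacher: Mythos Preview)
Your approach is essentially the same as the paper's. You use the identical decomposition $\mathrm{(I)}+\mathrm{(II)}$, which the paper calls $A+B$: for $\mathrm{(I)}=A$ both you and the paper Taylor-expand $g(\hat v_i)$ about $\hat v_j$, use the exact reproduction of the quadratic by $S_2$, let the symmetry of $K$ kill the cubic contribution, and arrive at the $O(h_n^2)$ bias; for $\mathrm{(II)}=B$ both bound $|g(v_i)-g(\hat v_i)|$ via the Lipschitz condition and Theorem~5, then push the resulting $O_p(n^{-1/2})$ vector through the explicit $S_2$ formula with $T_j^p$ orders to get $O_p\!\left(n^{-1/2}h_n^{-2}\right)$. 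Your added paragraph on upgrading in-probability bounds to expectation bounds via truncation and the concentration lemmas is a welcome clarification that the paper leaves implicit, but it does not change the route.
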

\begin{proof}
Calculate
\begin{align}
&\bar{g}''\left(\int X_j\hat{\beta}\right) - g''\left(\int X_j\hat{\beta}\right) \nonumber \\
& \hspace{.4cm} = S_2\left(\hat{\beta};j\right)\bm{g} - g''\left(\int X_j\hat{\beta}\right) \nonumber\\
& \hspace{.4cm} =  \frac{2T_j^0\left(\hat{\beta}\right)\sum\limits_{i=1}^n\left(\int X_i\hat{\beta}-\int X_j\hat{\beta}\right)^2K\left(\frac{\int X_i\hat{\beta}-\int X_j\hat{\beta}}{h_n}\right)g\left(\int X_i\beta^0\right)}{T_j^0\left(\hat{\beta}\right)T_j^4\left(\hat{\beta}\right) - \left(T_j^2\left(\hat{\beta}\right)\right)^2} \nonumber \\
& \hspace{.8cm} - \frac{-2T_j^2\left(\hat{\beta}\right)\sum\limits_{i=1}^nK\left(\frac{\int X_i\hat{\beta}-\int X_j\hat{\beta}}{h_n}\right)g\left(\int X_i\beta^0\right)}{T_j^0\left(\hat{\beta}\right)T_j^4\left(\hat{\beta}\right) - \left(T_j^2\left(\hat{\beta}\right)\right)^2} - g''\left(\int X_j\hat{\beta}\right). \nonumber
\end{align}
Write
\begin{align}
g\left(\int X_i\beta^0\right) = g\left(\int X_i\hat{\beta}\right) + \left(g\left(\int X_i\beta^0\right) - g\left(\int X_i\hat{\beta}\right)\right),\nonumber
\end{align}
for $i = 1,\cdots,n$, we decompose it into two terms $A$ and $B$, where $A$ is
\begin{align}
& \frac{2T_j^0\left(\hat{\beta}\right)\sum\limits_{i=1}^n\left(\int X_i\hat{\beta}-\int X_j\hat{\beta}\right)^2K\left(\frac{\int X_i\hat{\beta}-\int X_j\hat{\beta}}{h_n}\right)g\left(\int X_i\hat{\beta}\right)}{T_j^0\left(\hat{\beta}\right)T_j^4\left(\hat{\beta}\right) - \left(T_j^2\left(\hat{\beta}\right)\right)^2} \nonumber \\
& \hspace{.4cm} - \frac{2T_j^2\left(\hat{\beta}\right)\sum\limits_{i=1}^nK\left(\frac{\int X_i\hat{\beta}-\int X_j\hat{\beta}}{h_n}\right)g\left(\int X_i\hat{\beta}\right)}{T_j^0\left(\hat{\beta}\right)T_j^4\left(\hat{\beta}\right) - \left(T_j^2\left(\hat{\beta}\right)\right)^2} - g''\left(\int X_j\hat{\beta}\right), \nonumber
\end{align}
and $B$ is
\begin{align}
\frac{\sum\limits_{i=1}^n\left[2T_j^0\left(\hat{\beta}\right)\left(\int X_i\hat{\beta}-\int X_j\hat{\beta}\right)^2K\left(\frac{\int X_i\hat{\beta}-\int X_j\hat{\beta}}{h_n}\right)-2T_j^2\left(\hat{\beta}\right)K\left(\frac{\int X_i\hat{\beta}-\int X_j\hat{\beta}}{h_n}\right)\right]\left(g\left(\int X_i\beta^0\right) - g\left(\int X_i\hat{\beta}\right)\right)}{T_j^0\left(\hat{\beta}\right)T_j^4\left(\hat{\beta}\right) - \left(T_j^2\left(\hat{\beta}\right)\right)^2} .\nonumber
\end{align}
Denote $K_{ij} \doteq K\left(\frac{\int X_i\hat{\beta}-\int X_j\hat{\beta}}{h_n}\right)$, the term $A$ can be calculated as
\begin{align}
& \frac{2T_j^0\left(\hat{\beta}\right)\sum\limits_{i=1}^n\left(\int X_i\hat{\beta}-\int X_j\hat{\beta}\right)^2K_{ij}g\left(\int X_i\hat{\beta}\right)-2T_j^2\left(\hat{\beta}\right)\sum\limits_{i=1}^nK_{ij}g\left(\int X_i\hat{\beta}\right)}{T_j^0\left(\hat{\beta}\right)T_j^4\left(\hat{\beta}\right) - \left(T_j^2\left(\hat{\beta}\right)\right)^2} - g''\left(\int X_j\hat{\beta}\right)\nonumber \\
& \hspace{.4cm} = \frac{2T_j^0\left(\hat{\beta}\right)\sum\limits_{i=1}^n\left(\int X_i\hat{\beta}-\int X_j\hat{\beta}\right)^2K_{ij}g\left(\int X_i\hat{\beta}\right)}{T_j^0\left(\hat{\beta}\right)T_j^4\left(\hat{\beta}\right) - \left(T_j^2\left(\hat{\beta}\right)\right)^2} - 
\frac{2T_j^2\left(\hat{\beta}\right)\sum\limits_{i=1}^nK_{ij}g\left(\int X_i\hat{\beta}\right)}{T_j^0\left(\hat{\beta}\right)T_j^4\left(\hat{\beta}\right) - \left(T_j^2\left(\hat{\beta}\right)\right)^2} - g''\left(\int X_j\hat{\beta}\right),\nonumber
\end{align}
where
\begin{align}
&  \frac{2T_j^0\left(\hat{\beta}\right)\sum\limits_{i=1}^n\left(\int X_i\hat{\beta}-\int X_j\hat{\beta}\right)^2K\left(\frac{\int X_i\hat{\beta}-\int X_j\hat{\beta}}{h_n}\right)g\left(\int X_i\hat{\beta}\right)}{T_j^0\left(\hat{\beta}\right)T_j^4\left(\hat{\beta}\right) - \left(T_j^2\left(\hat{\beta}\right)\right)^2} \nonumber\\
& \hspace{.4cm} =  \frac{2T_j^0\left(\hat{\beta}\right)\sum\limits_{i=1}^n\left(\int X_i\hat{\beta}-u\right)^2K_{ij}\left[g\left(u\right)+g'\left(u\right)\left(\int X_i\hat{\beta}-u\right)+\frac{1}{2}g''\left(u\right)\left(\int X_i\hat{\beta}-u\right)^2 + O\left(\int X_i\hat{\beta}-u\right)^4\right]}{T_j^0\left(\hat{\beta}\right)T_j^4\left(\hat{\beta}\right) - \left(T_j^2\left(\hat{\beta}\right)\right)^2} \nonumber \\
& \hspace{.4cm} = \frac{2T_j^0\left(\hat{\beta}\right)\left(T_j^2\left(\hat{\beta}\right)g\left(u\right)+T_j^3\left(\hat{\beta}\right)g'\left(u\right)+\frac{1}{2}T_j^4\left(\hat{\beta}\right)g''\left(u\right)\right)}{T_j^0\left(\hat{\beta}\right)T_j^4\left(\hat{\beta}\right) - \left(T_j^2\left(\hat{\beta}\right)\right)^2} + O\left(h_n^2\right) \nonumber \\
& \hspace{.4cm} = \frac{2T_j^0\left(\hat{\beta}\right)T_j^2\left(\hat{\beta}\right)g\left(\int X_j\hat{\beta}\right)+T_j^0\left(\hat{\beta}\right)T_j^4\left(\hat{\beta}\right)g''\left(\int X_j\hat{\beta}\right)}{T_j^0\left(\hat{\beta}\right)T_j^4\left(\hat{\beta}\right) - \left(T_j^2\left(\hat{\beta}\right)\right)^2} + O\left(h_n^2\right), \nonumber
\end{align}
and
\begin{align}
& \frac{2T_j^2\left(\hat{\beta}\right)\sum\limits_{i=1}^nK\left(\frac{\int X_i\hat{\beta}-\int X_j\hat{\beta}}{h_n}\right)g\left(\int X_i\hat{\beta}\right)}{T_j^0\left(\hat{\beta}\right)T_j^4\left(\hat{\beta}\right) - \left(T_j^2\left(\hat{\beta}\right)\right)^2}\nonumber \\
& \hspace{.4cm} =  \frac{2T_j^2\left(\hat{\beta}\right)\sum\limits_{i=1}^nK\left(\frac{\int X_i\hat{\beta}-u}{h_n}\right)\left[g\left(u\right)+g'\left(u\right)\left(\int X_i\hat{\beta}-u\right)+\frac{1}{2}g''\left(u\right)\left(\int X_i\hat{\beta}-u\right)^2 +O\left(\int X_i\hat{\beta}-u\right)^4\right]}{T_j^0\left(\hat{\beta}\right)T_j^4\left(\hat{\beta}\right) - \left(T_j^2\left(\hat{\beta}\right)\right)^2} \nonumber \\
& \hspace{.4cm} =  \frac{2T_j^2\left(\hat{\beta}\right)\left(T_j^0\left(\hat{\beta}\right)g\left(u\right)+T_j^1\left(\hat{\beta}\right)g'\left(u\right)+\frac{1}{2}T_j^2\left(\hat{\beta}\right)g''\left(u\right)\right)}{T_j^0\left(\hat{\beta}\right)T_j^4\left(\hat{\beta}\right) - \left(T_j^2\left(\hat{\beta}\right)\right)^2} + O\left(h_n^2\right)\nonumber \\
& \hspace{.4cm} =  \frac{2T_j^2\left(\hat{\beta}\right)T_j^0\left(\hat{\beta}\right)g\left(\int X_j\hat{\beta}\right)+\left(T_j^2\left(\hat{\beta}\right)\right)^2g''\left(\int X_j\hat{\beta}\right)}{T_j^0\left(\hat{\beta}\right)T_j^4\left(\hat{\beta}\right) - \left(T_j^2\left(\hat{\beta}\right)\right)^2} + O\left(h_n^2\right).\nonumber
\end{align}
Therefore, the term $A$ is
\begin{align}
&\frac{2T_j^0\left(\hat{\beta}\right)T_j^2\left(\hat{\beta}\right)g\left(\int X_j\hat{\beta}\right)+T_j^0\left(\hat{\beta}\right)T_j^4\left(\hat{\beta}\right)g''\left(\int X_j\hat{\beta}\right)}{T_j^0\left(\hat{\beta}\right)T_j^4\left(\hat{\beta}\right) - \left(T_j^2\left(\hat{\beta}\right)\right)^2} \nonumber \\
& \hspace{.8cm} - \frac{2T_j^2\left(\hat{\beta}\right)T_j^0\left(\hat{\beta}\right)g\left(\int X_j\hat{\beta}\right)+\left(T_j^2\left(\hat{\beta}\right)\right)^2g''\left(\int X_j\hat{\beta}\right)}{T_j^0\left(\hat{\beta}\right)T_j^4\left(\hat{\beta}\right) - \left(T_j^2\left(\hat{\beta}\right)\right)^2} - g''\left(\int X_j\hat{\beta}\right) + O\left(h_n^2\right) \nonumber \\
& \hspace{.4cm} = g''\left(\int X_j\hat{\beta}\right) - g''\left(\int X_j\hat{\beta}\right) + O\left(h_n^2\right) \nonumber \\
& \hspace{.4cm} = O\left(h_n^2\right).\nonumber
\end{align}
The term $B$ is bounded by
\begin{align}
\left|B\right| \leq & \left|\frac{\sum\limits_{i=1}^n\left[2T_j^0\left(\hat{\beta}\right)\left(\int X_i\hat{\beta}-\int X_j\hat{\beta}\right)^2K\left(\frac{\int X_i\hat{\beta}-\int X_j\hat{\beta}}{h_n}\right)-2T_j^2\left(\hat{\beta}\right)K\left(\frac{\int X_i\hat{\beta}-\int X_j\hat{\beta}}{h_n}\right)\right]\left(E_1\frac{1}{\sqrt{n}}\right)}{T_j^0\left(\hat{\beta}\right)T_j^4\left(\hat{\beta}\right) - \left(T_j^2\left(\hat{\beta}\right)\right)^2}\right| \nonumber \\
=& \frac{\left(2T_j^0\left(\hat{\beta}\right)T_j^2\left(\hat{\beta}\right)-2T_j^0\left(\hat{\beta}\right)T_j^2\left(\hat{\beta}\right)\right)\left(E_1\frac{1}{\sqrt{n}}\right)}{T_j^0\left(\hat{\beta}\right)T_j^4\left(\hat{\beta}\right) - \left(T_j^2\left(\hat{\beta}\right)\right)^2} \nonumber \\
\sim & \frac{1}{\sqrt{n}h_n^2}, \nonumber
\end{align}
where $E_1$ is a constant.\\
Combining the terms $A$ and $B$, we have
\begin{align}
\mathrm{E}\left[\bar{g}''\left(\int X_j\hat{\beta}\right) - g''\left(\int X_j\hat{\beta}\right)\right]^2 = O\left(h_n^4 + \frac{1}{nh_n^4}\right).\nonumber
\end{align}
\end{proof}
\begin{lemma}
9 If $nh_n^6 \rightarrow \infty$, $nh_n^8 \rightarrow 0$ and $\frac{nh_n^{3+\frac{3}{m-1}}}{-\log h_n} \rightarrow \infty$, then we have
\begin{align}
\mathrm{E}\left[g''\left(\int X_j\hat{\beta}\right) - g''\left(\int X_j\beta^0\right)\right]^2 = O\left(\frac{1}{n}\right).\nonumber
\end{align}
\end{lemma}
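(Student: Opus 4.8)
The plan is to reduce the claim to the Lipschitz regularity of $g''$ together with the root-$n$ rate for the index already obtained in Theorem $5$; the only real work is converting an in-probability statement into a bound on an expectation.

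\textbf{Step 1: Lipschitz reduction.} By Assumption $3$, $g''$ is Lipschitz with constant $D_2$, so pointwise
\begin{align}
\left|g''\left(\int X_j\hat\beta\right) - g''\left(\int X_j\beta^0\right)\right| \le D_2\left|\int X_j\hat\beta - \int X_j\beta^0\right|, \nonumber
\end{align}
and hence
\begin{align}
\mathrm{E}\left[g''\left(\int X_j\hat\beta\right) - g''\left(\int X_j\beta^0\right)\right]^2 \le D_2^2\,\mathrm{E}\left[\int X_j\hat\beta - \int X_j\beta^0\right]^2. \nonumber
\end{align}
It therefore suffices to show $\mathrm{E}\left[\int X_j\hat\beta - \int X_j\beta^0\right]^2 = O(1/n)$.

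\textbf{Step 2: an $L^2$ bound on the index error.} I would extract this from the argument behind Theorem $5$ rather than from its $O_p(1)$ conclusion alone. There the index error decomposes, for a truncation level $p_n$ with $p_n\to\infty$ and $p_n = o(1/h_n)$, into the finite-dimensional part $\bm c_{j,p_n}(\hat{\bm b}_{p_n}-\bm b^0_{p_n})$ plus two tail terms. Conditionally on $\mathcal X$, $\sqrt n\,\bm c_{j,p_n}(\hat{\bm b}_{p_n}-\bm b^0_{p_n})$ is asymptotically $\mathrm{N}(0,p_n^2h_n^2)$, so its conditional second moment is of order $p_n^2h_n^2 = o(1)$ and this part contributes $o(1/n)$ to the mean squared index error. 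Each tail term is $O(p_n^{-\lambda})$ deterministically by Assumption $5$ (equation $(\ref{dim})$); choosing $p_n$ to grow fast enough that $p_n^{-\lambda}=O(n^{-1/2})$, which is compatible with $p_n=o(1/h_n)$ under the stated bandwidth constraints, makes the tails contribute $O(1/n)$ as well. Adding the pieces gives $\mathrm{E}\left[\int X_j\hat\beta - \int X_j\beta^0\right]^2 = O(1/n)$, and combining with Step $1$ completes the proof.

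\textbf{Main obstacle.} The delicate point is the passage from a statement in probability (Theorem $5$ yields $\sqrt n(\int X_j\hat\beta-\int X_j\beta^0)=O_p(1)$) to a bound on an \emph{expectation}: boundedness in probability does not by itself force $\mathrm{E}[\,\cdot\,]=O(1)$. I would close this gap by observing that the proof of Theorem $5$ in fact controls the conditional second moment of the leading term, not merely its limiting law, so that $n\,(\int X_j\hat\beta-\int X_j\beta^0)^2$ is uniformly integrable; and by checking carefully that $p_n$ can be chosen to push the truncation remainder below the $n^{-1/2}$ scale without violating $p_n=o(1/h_n)$. Everything else — the Lipschitz inequality, taking expectations, and assembling the two contributions — is routine.
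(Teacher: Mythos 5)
Your proposal follows essentially the same route as the paper: a Lipschitz reduction via Assumption $3$ followed by the root-$n$ rate for $\int X_j\hat\beta - \int X_j\beta^0$ from Theorem $5$. In fact you are more careful than the paper, whose proof simply writes $\mathrm{E}\left[g''\left(\int X_j\hat{\beta}\right) - g''\left(\int X_j\beta^0\right)\right]^2 \leq K_3 \left(\int X_{i}\hat{\beta} - \int X_{i}\beta^0\right)^2 = O\left(\frac{1}{n}\right)$ directly from the $O_p(1/\sqrt{n})$ conclusion without addressing the passage from a statement in probability to a bound on a second moment; your Step $2$ (conditional second-moment control of the leading term plus choosing $p_n$ so that $p_n^{-\lambda}=O(n^{-1/2})$ while keeping $p_n=o(1/h_n)$, which implicitly requires $\lambda$ large enough) is exactly the extra work needed to make that step rigorous.
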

\begin{proof}
By Theorem $5$, we have $\left(\int X_{i}\hat{\beta} - \int X_{i}\beta^0\right) = O\left(\frac{1}{\sqrt{n}}\right)$. Since $g''$ satisfies the Lipschitz condition, we can get
\begin{align}
\mathrm{E}\left[g''\left(\int X_j\hat{\beta}\right) - g''\left(\int X_j\beta^0\right)\right]^2 \leq K_3 \left(\int X_{i}\hat{\beta} - \int X_{i}\beta^0\right)^2 = O\left(\frac{1}{n}\right),\nonumber
\end{align}
where $K_3$ is a constant.
\end{proof}

\section{$10$-fold CV Results}
The simulation results using the $10$-fold cross-validation is:
\begin {table}[H]
\caption {Simulation results by $10$-fold CV using rescaled bandwidth $h\left\|\bm{c}\right\|$ and constraint $\hat{\beta}$.} \label{table5} 
\begin{center}
\begin{tabular}{ccccc|ccc|cccc}
\hline
\multirow{2}{*}{}&\multirow{2}{*}{} & \multicolumn{3}{c}{\textbf{g1}} &  \multicolumn{3}{c}{\textbf{g2}} & \multicolumn{3}{c}{\textbf{g3}} \\
\cline{3-11}
\textbf{Initial} & \textbf{n} & \textbf{RSE} & \textbf{RASE} & \textbf{RASE2} & \textbf{RSE} & \textbf{RASE} & \textbf{RASE2} & \textbf{RSE} & \textbf{RASE} & \textbf{RASE2} & \\
\hline
\multirow{2}{*}{True} & 100 & 0.1921 & 0.0868 & 1.2106 & 0.2052 & 0.0904 & 4.5719 & 0.1982 & 0.0690 & 0.7954\\
 & 1000& 0.0680 & 0.0290 & 0.4551 & 0.0620 & 0.0271 & 0.4016 & 0.0774 & 0.0243 & 0.2762 \\
\hline
\multirow{2}{*}{Linear} & 100 & 1.1383 & 0.3294 & 1.4303 & 1.3370 & 0.4245 & 1.3388 & 0.6995 & 0.1331 & 0.2692 \\
& 1000 & 1.9015 & 0.2232 & 0.5060 & 1.0419 & 0.3185 & 1.3652 & 0.5297 & 0.0852 & 0.0786 \\
\hline
\multirow{2}{*}{Equal} & 100 & 0.8486 & 0.2789 & 1.0731 & 0.8385 & 0.2509 & 1.4728 & 0.8349 & 0.1836 & 0.3349 \\
& 1000 & 0.8442 & 0.2977 & 1.0181 & 0.8500 & 0.2755 & 1.5396 & 0.8447 & 0.1827 & 0.1045 \\
\hline
\multirow{2}{*}{Random} & 100 & 1.2998 & 0.1339 & 0.5191 & 1.2086 & 0.1979 & 0.7160& 1.4296 & 0.0875 & 0.2362\\
& 1000 & 1.5607 & 0.1288 & 0.4209 & 1.3540 & 0.1687 & 0.4418 & 1.7622 & 0.0798 & 0.0744 
\\[1ex] \hline
\end{tabular}
\end{center}
\end{table}
\noindent For comparison purpose, we estimate the curvature with or without rescaling the bandwidth, where the results are in Table $4$.
\begin{table}
\caption {Simulation results of random starting value with rescaled and original bandwidths selected by $10$-fold cross-validation} \label{table1234} 
\begin{center}
\begin{tabular}{ccc|cc|ccc}
\hline
\multirow{2}{*}{} & \multicolumn{2}{c}{\textbf{g1}} &  \multicolumn{2}{c}{\textbf{g2}} & \multicolumn{2}{c}{\textbf{g3}} \\
\cline{2-7}
\textbf{n} & \textbf{original} & \textbf{rescaled} & \textbf{original} & \textbf{rescaled} &  \textbf{original} & \textbf{rescaled}  \\
\hline
\multirow{2}{*}{}  100 & 1.2710  &0.5191  & 2.2005&  0.7160  & 0.2768 & 0.2362 \\
 1000 & 1.0145 & 0.4209  & 2.2812 & 0.4418  & 0.0752 & 0.0744
 \\[1ex] \hline
\end{tabular}
\end{center}
\end{table}

\section{True Starting Value}
In Table $5$ and $6$, we compare the RASE$2$ results of true starting value with rescaled and original bandwidths.
\begin{table}
\caption {Simulation results of true starting value with rescaled and original bandwidths selected by $10$-fold cross-validation} \label{table123} 
\begin{center}
\begin{tabular}{ccc|cc|ccc}
\hline
\multirow{2}{*}{} & \multicolumn{2}{c}{\textbf{g1}} &  \multicolumn{2}{c}{\textbf{g2}} & \multicolumn{2}{c}{\textbf{g3}} \\
\cline{2-7}
\textbf{n} & \textbf{original} & \textbf{rescaled} & \textbf{original} & \textbf{rescaled} &  \textbf{original} & \textbf{rescaled}  \\
\hline
\multirow{2}{*}{}  100 & 5.9302  &1.9266  & 33.811& 7.9015  & 4.5630 & 0.9791 \\
 1000 & 2.4004 & 1.0660  & 2.2494 & 1.3416  & 1.2444 & 0.3558 
 \\[1ex] \hline
\end{tabular}
\end{center}
\end{table}
\begin {table}[H]
\caption {Simulation results of true starting value with rescaled and original bandwidths selected by GCV} \label{table4} 
\begin{center}
\begin{tabular}{ccc|cc|ccc}
\hline
\multirow{2}{*}{} & \multicolumn{2}{c}{\textbf{g1}} &  \multicolumn{2}{c}{\textbf{g2}} & \multicolumn{2}{c}{\textbf{g3}} \\
\cline{2-7}
\textbf{n} & \textbf{original} & \textbf{rescaled}  & \textbf{original} & \textbf{rescaled}  &  \textbf{original} & \textbf{rescaled} \\
\hline
\multirow{2}{*}{}  100 & 51.397  &7.3786  & 38.007 & 6.5712  & 50.816 &  5.9508 \\ 
 1000 & 9.0220 & 1.5416   & 8.9590 & 1.8170 & 7.8245 & 1.1493 
\\[1ex] \hline
\end{tabular}
\end{center}
\end{table}

\section{CV Values}
The CV values for both GCV and $10$-fold cross-validation are in Table $7$.
\begin {table}[H]
\caption {CV values} \label{table389} 
\begin{center}
\begin{tabular}{cccc|cc|ccc}
\hline
\multirow{2}{*}{}&\multirow{2}{*}{} & \multicolumn{2}{c}{\textbf{g1}} &  \multicolumn{2}{c}{\textbf{g2}} & \multicolumn{2}{c}{\textbf{g3}} \\
\cline{3-8}
\textbf{Initial} & \textbf{n} & \textbf{GCV} & \textbf{10-fold} & \textbf{GCV} & \textbf{10-fold} & \textbf{GCV} & \textbf{10-fold}  \\
\hline
\multirow{2}{*}{True} & 100 & 0.0398 & 3.8686 & 0.0399 & 3.3823 & 0.0377 & 1.4970 \\
 & 1000& 0.0401 & 0.9468 & 0.0402 & 0.7882 & 0.0400 & 0.4781\\
\hline
\multirow{2}{*}{Linear} & 100 & 0.1559 & 5.2259 & 0.2184 & 4.7373 & 0.0498 & 1.6148 \\
& 1000 & 0.0793 & 2.0295 & 0.1272 & 2.2896 & 0.0439 & 0.7265 \\
\hline
\multirow{2}{*}{Equal} & 100 & 0.1401 & 4.7640 & 0.1046 & 3.9670 & 0.0795 & 1.8530 \\
& 1000 & 0.1358 & 2.6925 & 0.1318 & 2.1995 & 0.0751 & 1.1081 \\
\hline
\multirow{2}{*}{Random} & 100 & 0.0611 & 4.2920 & 0.0957 & 3.9025 & 0.0459 & 1.6237\\
& 1000 & 0.0591 & 2.4463 & 0.0850 & 2.2702 & 0.0468 & 1.0243
\\[1ex] \hline
\end{tabular}
\end{center}
\end{table}

\bibliographystyle{chicago}
\clearpage
\bibliography{theoretical}

\end{document}